\newtheorem{theorem}{Theorem}[section]
\newtheorem{lemma}[theorem]{Lemma}
\newtheorem{definition}[theorem]{Definition}
\newtheorem{proposition}[theorem]{Proposition}
\newtheorem{corollary}[theorem]{Corollary}
\newtheorem{conjecture}[theorem]{Conjecture}
\newtheorem{algorithm}[theorem]{Algorithm}
\newtheorem{remark}[theorem]{Remark}
\newcommand{\R}{\mathbb{R}}
\newcommand{\C}{\mathbb{C}}
\newcommand{\N}{\mathbb{N}}
\newcommand{\Z}{\mathbb{Z}}
\newcommand{\pp}{\tfrac{\pi}{2}}
\newcommand{\cA}{\mathcal{A}}
\newcommand{\cS}{\mathcal{S}}
\newcommand{\cX}{\mathcal{X}}
\newcommand{\bydef}{\,\stackrel{\mbox{\tiny\textnormal{\raisebox{0ex}[0ex][0ex]{def}}}}{=}\,} 
\title{Stability and  Uniqueness of Slowly Oscillating Periodic Solutions to Wright's Equation}
\author{
Jonathan Jaquette \thanks{Partially supported by NSF DMS 0915019,	NSF DMS 1248071} \thanks{
Department of Mathematics, Hill Center-Busch Campus, Rutgers, The State University of New Jersey, Piscataway, NJ, USA, 08854-8019.
{\tt jaquette@math.rutgers.edu}
}
\and Jean-Philippe Lessard \thanks{Partially supported by NSERC} \thanks
        { Universit\'e Laval, D\'epartement de Math\'ematiques et de Statistique, 1045 avenue de la M\'edecine, Qu\'ebec, QC, G1V0A6, Canada.  
        {\tt jean-philippe.lessard@mat.ulaval.ca}
        }
         \and Konstantin Mischaikow\thanks{Partially supported by NSF DMS grants 0915019, 1125174, 1248071, 1521771, 1622401}\thanks{
         Department of Mathematics, Hill Center-Busch Campus, Rutgers, The State University of New Jersey, Piscataway, NJ, USA, 08854-8019.
{\tt mischaik@math.rutgers.edu}}
}
\begin{document}

\maketitle

\begin{abstract}
In this paper, we prove that Wright's equation $y'(t) = - \alpha y(t-1) \{1 + y(t)\}$ has a unique slowly oscillating periodic solution (SOPS) for all parameter values $\alpha \in [ 1.9,6.0]$, up to time translation. 
Our proof is based on a same strategy employed earlier by Xie \cite{xie1993uniqueness}; show that every SOPS is asymptotically stable. 
We first introduce a {\em branch and bound} algorithm to control all SOPS using {\em bounding functions} at all parameter values $\alpha \in [ 1.9,6.0]$. 
Once the bounding functions are constructed, we then control the Floquet multipliers of all possible SOPS by solving rigorously an eigenvalue problem, again using a formulation introduced by Xie. 
Using these two main steps, we prove that all SOPS of Wright's equation are asymptotically stable for $\alpha \in [ 1.9,6.0]$, and the proof follows. 
This result is a step toward the proof of the Jones' Conjecture formulated in 1962. 
\end{abstract}

\begin{center}
{\bf \small Key words.} 
{ \small Wright's Equation $\cdot$ Jones's Conjecture $\cdot$ Delay Differential
Equations \\ Computer-Assisted Proofs $\cdot$ Branch and Bound}
\end{center}


\section{Introduction}

In \cite{wright1955non} Wright studied the delay differential equation (DDE)
\begin{equation}
	\label{eq:Wright}
	y'(t) = - \alpha y(t-1) \{1 + y(t)\}
\end{equation}
and showed that if $ \alpha > \tfrac{\pi }{2} $ and a solution  $ y(t) $ was positive  for $ t \in (0,1)$, then $y$ does not approach $0$ as $ t \to \infty$ and there are infinitely many zeros of $y$ separated by a distance greater than unity. 
If a periodic solution has this property it is said to be ``slowly oscillating'' and is formally defined as follows.  
\begin{definition}
	A \emph{slowly oscillating periodic solution (SOPS)}  is a periodic solution $y(t)$ with the following property: 
	there exist $q, \bar{q} >1$ and $ L = q + \bar{q}$ such that up to a time translation, 
	$ y(t) >0$ on the interval $(0,q)$,
	$y(t) < 0$ on the interval $(q,L)$, 
	and $y(t+L) = y(t)$ for all $ t$,
	so that $ L $ is the minimal period of $y(t)$.  
\end{definition}

The existence of SOPS to~\eqref{eq:Wright}  for all $ \alpha > \pp$  was proven in 1962 by Jones \cite{jones1962existence} who formulated the following conjecture based on  numerical experiments \cite{jones1962nonlinear}: 
\begin{conjecture}[Jones' Conjecture] \label{Jones Conjecture}
	For all $ \alpha > \pp$ there exists a unique SOPS to \eqref{eq:Wright} (up to time translation).
\end{conjecture}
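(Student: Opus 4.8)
The plan is to prove the conjecture by reducing uniqueness of the SOPS to a dynamical stability statement, following the strategy of Xie~\cite{xie1993uniqueness}, and then establishing that stability rigorously. The reduction is this: for a fixed $\alpha$, if every SOPS of~\eqref{eq:Wright} is asymptotically stable --- equivalently, hyperbolic with the trivial Floquet multiplier $\mu=1$ simple and every other Floquet multiplier inside the open unit disk --- then the SOPS is unique up to time translation. This follows by studying the Poincar\'e return map on the closed convex cone of slowly oscillating initial data: the map is compact, its fixed points are precisely the SOPS, Jones' theorem supplies at least one, and a fixed-point index argument (exploiting the ordering structure of the slowly oscillating flow) shows that an asymptotically stable SOPS must be isolated and in fact the only one. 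Hence it suffices to prove that \emph{every} SOPS is asymptotically stable for every $\alpha > \pp$.

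The obstruction is that a priori there could be many SOPS at a given parameter, so one needs uniform control over all of them. A SOPS is determined by its restriction to one period $[0,L]$ together with the real data $q,\bar q, L=q+\bar q$ and its extrema, and classical a priori bounds (Wright, Jones, Nussbaum) confine these to a bounded region. On top of these bounds I would run a \emph{branch and bound} scheme on the window $\alpha \in [1.9,6.0]$: partition the parameter interval and a finite-dimensional parametrization of admissible SOPS into boxes, and for each box use interval arithmetic to either certify that it contains no SOPS, or subdivide it, or certify that every SOPS with data in that box is trapped between an explicit pair of piecewise-polynomial \emph{bounding functions} $y^-(t) \le y(t) \le y^+(t)$. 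Integrating~\eqref{eq:Wright} forward with interval arithmetic, and using the known sign pattern to control the wrapping effect, closes this recursion after finitely many boxes, producing a finite list of bounding-function pairs covering all SOPS in the window.

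With the bounding functions in hand, one turns to the Floquet multipliers. Linearizing~\eqref{eq:Wright} about a SOPS $y_*$ gives $v'(t) = -\alpha\{1+y_*(t)\}\,v(t-1) - \alpha\,y_*(t-1)\,v(t)$; the monodromy operator is compact, $\mu=1$ is always an eigenvalue with eigenfunction $y_*'$, and asymptotic stability amounts to showing the rest of the spectrum lies strictly inside the unit disk. Using Xie's reformulation of this eigenvalue problem --- which exploits identities such as $\int_0^L y_*(t)\,dt = 0$, obtained by integrating $\big(\log(1+y_*)\big)' = -\alpha\,y_*(\cdot-1)$ over a period --- the question becomes the non-vanishing of an explicit transcendental function of $\mu \in \C$ on and outside the unit circle, with an a priori bound confining any unstable multiplier to a bounded region. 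Substituting the enclosures $y^\pm$ and evaluating this function with interval arithmetic over that region certifies stability box by box. Combining the three steps gives uniqueness for all $\alpha \in [1.9,6.0]$; together with Xie's uniqueness result for sufficiently large $\alpha$ (which overlaps the upper end of this window) and the known local and global bifurcation picture near $\alpha=\pp$, the only range left uncovered is the interval just above $\pp$ up to $1.9$, and resolving that is what would complete the conjecture.

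The main obstacle is the branch-and-bound step: it must terminate with a tractable number of boxes while still producing bounding functions tight enough for the Floquet computation to succeed. The a priori estimates degrade for small-amplitude SOPS near $\alpha=\pp$ (which is what forces the cutoff at $1.9$) and for large-amplitude SOPS at large $\alpha$, whose profile becomes increasingly steep, so the real work is in balancing subdivision depth, the polynomial order of the enclosures, and the conditioning of the transcendental Floquet function; making all three cooperate uniformly across the parameter window is the crux.
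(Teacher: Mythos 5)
The statement you were asked to prove is the full Jones conjecture, for \emph{all} $\alpha > \pp$, and neither your proposal nor the paper actually proves it: in the paper it is stated as a conjecture and remains open. Your argument for $\alpha \in [1.9,6.0]$ is essentially the paper's own strategy (Xie's reduction of uniqueness to asymptotic stability of every SOPS, a branch-and-bound/prune construction of interval bounding functions indexed by the data $(q,\bar q,\max x)$, and an interval-arithmetic bound on the nontrivial Floquet multipliers of every SOPS trapped by those bounds), and you correctly note that Xie's result covers $\alpha \geq 5.67$. But you then concede that ``the only range left uncovered is the interval just above $\pp$ up to $1.9$, and resolving that is what would complete the conjecture.'' That concession is the whole problem: a proof of the stated conjecture must cover $(\pp, 1.9)$, and nothing in your proposal does. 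The ``known local and global bifurcation picture near $\alpha=\pp$'' that you invoke only controls the principal branch emanating from the Hopf bifurcation (no saddle-node bifurcations on $(\pp,2.3]$, plus uniqueness in a small explicit neighborhood of the bifurcation point); it does not exclude isolas of SOPS elsewhere in $(\pp,1.9)$, which is exactly the paper's remaining Conjecture~\ref{prop:RemainingConjecture}.

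Moreover, this gap is not one that your method can be expected to close by sharper bookkeeping. The entire approach rests on showing that \emph{every} SOPS is asymptotically stable, and near the Hopf bifurcation the SOPS is only weakly attracting, so the Floquet bound $\Lambda_{max}<1$ degenerates: the paper reports that even with precise numerical approximations of a single SOPS, Xie's multiplier estimate fails below roughly $\alpha = 1.85$. So the cutoff at $1.9$ is not merely a matter of ``balancing subdivision depth'' and enclosure order, as your final paragraph suggests; the authors state explicitly that new ideas are required for $(\pp,1.9)$. As written, your proposal establishes (modulo successful computations) the paper's Theorem~\ref{prop:MainResult} and its corollary for $\alpha \geq 1.9$, but not Conjecture~\ref{Jones Conjecture}.
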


We briefly describe  results on the global dynamics of Wright's equation. 
At $\alpha = \pp$ there is a super-critical Hopf bifurcation  \cite{chow1977integral}. 
This branch of SOPS grows without bound, that is  for each $L > 4$ this branch contains SOPS of period $L$  \cite{nussbaum1977range} 
and for all $ \alpha > \pp$ this branch contains SOPS at parameter $ \alpha $   \cite{nussbaum1975global}. 
For $\alpha > \pp$ there is an asymptotically stable annulus in the $( x(t), x(t-1) ) $ plane whose boundary is a pair of slowly oscillating periodic orbits \cite{kaplan1975stability}.  
In \cite{xie1991thesis} Xie showed that if $\alpha \geq 5.67$,  then there is a unique SOPS to Wright's equation. 
In this paper using the computer we are able to extend Xie's method of proof thereby obtaining the following result.
\begin{theorem}
	There exists a unique SOPS to Wright's equation for $ \alpha \in [ 1.9,6.0]$. 
	\label{prop:MainResult}
\end{theorem}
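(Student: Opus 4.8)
The plan is to follow Xie's strategy: reduce the uniqueness statement to an assertion about the asymptotic stability of \emph{every} SOPS, then verify that stability with rigorous computation over the whole parameter interval. First I would recall Xie's dichotomy. A slowly oscillating periodic solution $y$ of \eqref{eq:Wright} determines, via linearization along the orbit, a monodromy (period map) operator whose spectrum consists of Floquet multipliers; one of these is always $1$ (the time-translation direction), and $y$ is orbitally asymptotically stable precisely when every other multiplier has modulus strictly less than $1$. Xie showed that in the slowly oscillating regime the monodromy operator has a variation-diminishing / sign-regularity structure (akin to a totally positive or cyclic-positivity property), which forces the multipliers to be simple, positive, and ordered; consequently the ``next'' multiplier after $1$, call it $\mu_2$, is the one that decides stability, and moreover a connectedness/degree argument shows that if every SOPS is stable then there is a unique SOPS (two stable SOPS at the same parameter cannot coexist, and stability along the whole global branch plus the known existence forces uniqueness for each $\alpha$). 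So the theorem reduces to: for all $\alpha\in[1.9,6.0]$ and every SOPS $y$ at that $\alpha$, the second Floquet multiplier $\mu_2$ satisfies $|\mu_2|<1$.

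The difficulty is that there is no closed form for the SOPS, so one cannot compute its multipliers directly; one must control \emph{all} SOPS simultaneously and uniformly in $\alpha$. The plan here is the two-stage scheme announced in the abstract. Stage one: a \emph{branch and bound} procedure that, for each subinterval of $[1.9,6.0]$, produces rigorous \emph{bounding functions} $\underline{y},\overline{y}$ and rigorous enclosures of the lap lengths $q,\bar q$ and period $L$, such that every SOPS at every $\alpha$ in that subinterval lies pointwise between $\underline{y}$ and $\overline{y}$; one starts from a priori estimates (Wright's positivity estimates, Nussbaum-type amplitude bounds, the bounds implicit in the a priori constraints on $q,\bar q$) and refines by subdividing parameter and phase-space until the enclosures are tight enough, discarding boxes that provably contain no SOPS. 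Stage two: feed these bounding functions into Xie's eigenvalue formulation. The linearized period map acting on the relevant multiplier can be recast as a boundary-value / integral eigenvalue problem whose coefficients depend on the (unknown but now enclosed) SOPS; replacing those coefficients by their interval enclosures yields a rigorously solvable eigenvalue problem, and one shows the resulting interval for $\mu_2$ lies strictly inside the unit disk. Since Xie's structure theory guarantees $\mu_2$ is real and positive, it suffices to prove $0<\mu_2<1$, which is a scalar rigorous-arithmetic certification.

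Concretely I would carry out the steps in this order. (i) State and prove the reduction lemma: ``if every SOPS of \eqref{eq:Wright} at parameter $\alpha$ is asymptotically stable for all $\alpha$ in a given interval, then the SOPS is unique for each such $\alpha$,'' citing Xie \cite{xie1991thesis,xie1993uniqueness} and the global-branch results of Nussbaum \cite{nussbaum1975global,nussbaum1977range} for the existence/connectedness input. (ii) Establish the a priori estimates that seed the branch and bound: uniform bounds on amplitude, on the lap lengths $q$ and $\bar q$, and on $L$, valid for all SOPS with $\alpha\in[1.9,6.0]$. (iii) Describe and implement the branch and bound algorithm on $[1.9,6.0]$, proving that its output is a finite collection of parameter boxes each equipped with validated bounding functions enclosing all SOPS. (iv) Recall Xie's sign-regularity/variation-diminishing property of the monodromy operator in the slowly oscillating regime, so that stability is equivalent to $\mu_2<1$ with $\mu_2$ the unique second multiplier, which is real and positive. (v) For each parameter box, set up Xie's eigenvalue problem with the coefficients replaced by the bounding-function enclosures, and verify rigorously (interval Newton / radii-polynomial-type bounds) that the enclosed second multiplier satisfies $\mu_2<1$. (vi) Conclude asymptotic stability of all SOPS for all $\alpha\in[1.9,6.0]$, and invoke step (i) to obtain Theorem~\ref{prop:MainResult}. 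The main obstacle I anticipate is step (iii)–(v): making the bounding functions simultaneously tight enough that the eigenvalue enclosure clears the threshold $\mu_2<1$ near the harder end of the interval (small $\alpha$ close to $1.9$, where the orbit is small and the period map is near the Hopf regime, and the ``slowly oscillating'' structure is least robust), while keeping the number of branch-and-bound boxes—and hence the rigorous computation—finite and manageable; this is where the bulk of the algorithmic design and the delicate a priori estimates will be needed.
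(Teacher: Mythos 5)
Your overall architecture matches the paper's: Xie's reduction of uniqueness to asymptotic stability of every SOPS, a branch-and-bound construction of rigorous bounding functions and enclosures of $q$, $\bar q$ and the period uniformly over parameter subintervals, and then a stability verification via Xie's boundary-value formulation of the Floquet multipliers. Your steps (i)--(iii) and (vi) are essentially what the paper does.

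The gap is in steps (iv)--(v). You assert that Xie's theory supplies a sign-regularity / total-positivity structure forcing the nontrivial multipliers to be simple, real, positive and ordered, so that stability reduces to the single scalar inequality $0<\mu_2<1$, which you then propose to certify by an interval-Newton enclosure of $\mu_2$. No such structure theorem is invoked by Xie or by the paper, and you do not prove one; the nontrivial spectrum of the monodromy operator of a SOPS is an infinite set of (a priori complex) numbers, and asymptotic stability requires $|\lambda|<1$ for \emph{all} of them. Rigorously enclosing one distinguished eigenvalue does not exclude other multipliers from the exterior of the unit disk, so as written your step (v) does not establish stability. What the paper actually does (Theorem \ref{prop:BVP} and Algorithm \ref{alg:FloquetBound}) is take an \emph{arbitrary} solution $(\lambda,h)$ of the boundary-value problem \eqref{eq:Linearized}--\eqref{eq:Multiplier}, normalize $\sup_{[-1,0]}|h|=1$, and propagate the bounding functions through variation of parameters (rigorous Riemann sums) to produce a pointwise majorant $Z_L$ of $z_L=\lambda h$, whence $|\lambda|\le \Lambda_{max}=\sup Z_L$ uniformly over the whole nontrivial spectrum and over all SOPS enclosed by the box; this is iterated, and when $\Lambda_{max}\ge 1$ a contradiction argument is run instead. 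A secondary problem with your step (v) on its own terms: since the coefficients of the eigenvalue problem are known only up to the interval enclosure of the orbit, one faces a family of eigenvalue problems rather than one, and an interval Newton method needs an isolated approximate eigenpair that the branch-and-bound output does not provide. Replacing (iv)--(v) by a uniform sup-norm bound on all solutions of the multiplier boundary-value problem closes the argument.
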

\noindent
Combining Theorem \ref{prop:MainResult} with the work of \cite{xie1991thesis} it follows that 
there exists a unique SOPS to Wright's equation for $ \alpha \geq 1.9$.

The set of all periodic orbits to Wright's equation for $ \alpha>0$ form a $2$-dimensional manifold \cite{regala1989periodic}.
Based on this, it is proposed in \cite{lessard2010recent} to divide Conjecture \ref{Jones Conjecture} into two parts: 
\begin{enumerate}
\item[(1)] there are no saddle-node bifurcations in the branch of SOPS emanating from the Hopf bifurcation at $\alpha = \pp$, and 
\item[(2)] there are no other connected components (isolas) of SOPS for $ \alpha > \pp$. 
\end{enumerate}
Using computer-assisted proofs, it has been shown that  saddle-node bifurcations do not occur for neither $\alpha \in [\pp + \epsilon_1, 2.3]$ where $\epsilon_1 = 7.3165 \times 10^{-4}$ \cite{lessard2010recent} nor $ \alpha \in ( \pp , \pp+ \epsilon_2)$ for $ \epsilon_2 = 6.830 \times 10^{-3} $ \cite{BergJaquette}. 
Together with these results, Theorem \ref{prop:MainResult} fills in the gap $\alpha \in ( 2.3,5.67)$ needed to resolve part (1) of the Jones' conjecture (see \cite[Corollary 4.8]{BergJaquette} for the complete proof).  
Furthermore,  the paper \cite{BergJaquette} explicitly constructs a neighborhood  about the bifurcation point at $ \alpha = \pp$ within which there exists at most one SOPS to Wright's equation. 
The proof for Theorem \ref{prop:MainResult} is given in Section \ref{sec:Discussion}. 
Thereby the Jones conjecture (Conjecture~\ref{Jones Conjecture}) is reduced to the following open problem:
\begin{conjecture}
	There are no isolas of SOPS for $ \alpha \in ( \pp , 1.9)$. 
	\label{prop:RemainingConjecture}
\end{conjecture}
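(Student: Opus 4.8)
The plan is to prove the sharper statement that \eqref{eq:Wright} has a \emph{unique} SOPS for every $\alpha\in(\pp,1.9)$ — exactly the conclusion obtained here for $\alpha\in[1.9,6.0]$ — which together with Theorem~\ref{prop:MainResult} and \cite{xie1991thesis} would settle Jones' Conjecture outright. The reduction that makes uniqueness (rather than mere absence of isolas) the natural target is the following. Since the set of periodic solutions of \eqref{eq:Wright} is a two-dimensional manifold \cite{regala1989periodic}, any isola is a compact connected component of this manifold that projects onto a proper subinterval $[\alpha_-,\alpha_+]\subset(\pp,1.9)$; at a parameter-extremal SOPS on the isola (say over $\alpha_+$) the linearized period map fails to be continued in $\alpha$, so by the implicit function theorem that SOPS possesses a nontrivial Floquet multiplier on the unit circle (in the generic fold case a second multiplier equal to $1$), and hence is not asymptotically stable. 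It therefore suffices to show that \emph{every} SOPS at \emph{every} $\alpha\in(\pp,1.9)$ is asymptotically stable, and the machinery of the present paper — branch and bound, bounding functions, and Xie's eigenvalue formulation — is in principle directly applicable.

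Concretely I would proceed in three steps. First, re-run the branch-and-bound construction of bounding functions over compact subintervals $[\pp+\epsilon,1.9]$, enclosing all SOPS at all such $\alpha$. Second, feed these bounding functions into the rigorous solution of Xie's eigenvalue problem to certify that every nontrivial Floquet multiplier of every enclosed SOPS lies strictly inside the unit disk, exactly as is done here for $[1.9,6.0]$. Third, cover the residual window $\alpha\in(\pp,\pp+\epsilon]$ adjacent to the Hopf point by the local analysis of \cite{BergJaquette}, which combines the supercritical Hopf normal form of \cite{chow1977integral} with rigorous estimates near the bifurcation to show that there is at most one SOPS on an explicit right-neighborhood of $\pp$; choosing $\epsilon$ no larger than the reach of that neighborhood makes the three ranges fit together. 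Combining the steps, every SOPS on $(\pp,1.9)$ is asymptotically stable, hence hyperbolic; by the implicit function theorem the SOPS at each parameter then continues uniquely in $\alpha$, and together with the a priori bounds that prevent the amplitude or period from blowing up and with Theorem~\ref{prop:MainResult} at $\alpha=1.9$ this forces a single global branch and precludes isolas.

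The hard part is Steps~1--2 in the limit $\alpha\to\pp^+$, which is precisely why the present paper stops at $1.9$. The width of any honest family of bounding functions is tied to the amplitude of the SOPS it must contain, and that amplitude collapses to $0$ as $\alpha\downarrow\pp$; consequently the a priori estimates driving the branch-and-bound degenerate, the number of boxes required to close the argument grows without control, and the eigenvalue problem built on these bounds becomes increasingly ill-conditioned, with the relevant multipliers and the spectral separation one must verify approaching the unit circle in a quantitatively punishing way. A naive continuation of the computation will therefore not terminate down to a fixed $\pp+\epsilon$. The most plausible remedy is to recast both the bounding-function step and the eigenvalue step in amplitude-rescaled variables, so that the singular Hopf limit becomes a \emph{regular} limit that can be matched smoothly onto the normal-form description underlying \cite{BergJaquette}; making the constants in that matching explicit, so that the computational regime $[\pp+\epsilon,1.9]$ and the analytic regime $(\pp,\pp+\epsilon]$ genuinely overlap, is the crux of the remaining problem. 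A more global alternative would be to bypass stability entirely and instead compute a fixed-point index of the return map on the two-dimensional manifold of \cite{regala1989periodic}, but obtaining that index rigorously seems no easier than the route above.
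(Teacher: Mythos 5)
The statement you have been asked to prove is not proved in the paper at all: it is stated as Conjecture \ref{prop:RemainingConjecture}, i.e.\ as the \emph{open problem} to which the authors reduce Jones' Conjecture after establishing Theorem \ref{prop:MainResult}. Section \ref{sec:Discussion} explicitly explains why the method stops at $\alpha = 1.9$ and states that new ideas are believed to be required on $(\pp,1.9)$. There is therefore no proof in the paper to compare against, and your text must be judged on its own as a purported proof.

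Judged that way, it has a genuine gap --- one you largely name yourself. Your reduction (every SOPS asymptotically stable $\Rightarrow$ uniqueness $\Rightarrow$ no isolas, via Theorem \ref{prop:Xie}) is sound and is exactly the paper's strategy on $[1.9,6.0]$. But Steps 1 and 2 of your program are precisely what fails below $1.9$: the paper reports that even when fed precise numerical approximations of a single SOPS, Xie's Floquet-multiplier machinery cannot certify asymptotic stability below roughly $\alpha = 1.85$, because the leading nontrivial multiplier approaches the unit circle as $\alpha\downarrow\pp$ and no enclosure of the admissible width can separate it from $1$. Your Step 3 also overstates what \cite{BergJaquette} supplies: the explicit neighborhood of the Hopf point on which at most one SOPS exists has width $\epsilon_2 = 6.830\times 10^{-3}$, so making ``the three ranges fit together'' would require the computational regime to reach down to about $\pp + 0.007 \approx 1.578$, far beyond the reach of the present method. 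The proposed remedy --- recasting both the bounding-function step and the eigenvalue problem in amplitude-rescaled variables so that the Hopf limit becomes regular --- is a plausible research direction but is nowhere carried out: no rescaled eigenvalue problem is formulated, no estimates are proved, and no overlap between the computational and analytic regimes is established. As written, this is a program rather than a proof, and the conjecture remains open.
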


To study Wright's equation, we make the change of variables $ x = \ln (1+y)$, obtaining the equivalent   differential equation 
\begin{equation}
x'(t) = - \alpha f( x(t-1)) 
\label{eq:FeedBack}
\end{equation}
where $f(x) = e^x -1$, which we will hereafter refer to as Wright's equation. 
Critical to Xie's result on the Jones conjecture is the relation between the asymptotic dynamics of SOPS and their global uniqueness.
\begin{theorem}[See \cite{xie1991thesis,xie1993uniqueness}]
	If $ \alpha > \pp$ and every SOPS to~\eqref{eq:FeedBack} is asymptotically stable, then~\eqref{eq:FeedBack} has a unique SOPS up to a time translation.  
	\label{prop:Xie}
\end{theorem}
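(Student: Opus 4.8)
The plan is to proceed by contradiction, following Xie's global argument. Suppose that $\alpha > \pp$ and that every SOPS to \eqref{eq:FeedBack} is asymptotically stable, yet there exist two SOPS that are not translates of one another. The strategy is to exploit the fact, recorded in the excerpt, that for fixed $\alpha$ the set of slowly oscillating periodic orbits forms part of a low-dimensional structure, and more importantly that — by the work emanating from Jones' existence proof and Nussbaum's continuation results — one can continuously deform the period (or equivalently follow a one-parameter family of SOPS) and thereby connect distinct SOPS through a continuum of slowly oscillating periodic orbits. First I would set up the monodromy operator: linearizing \eqref{eq:FeedBack} about a SOPS $x(t)$ of minimal period $L$ gives a linear periodic DDE whose period map $U$ on $C([-1,0])$ is compact, and asymptotic (orbital) stability of $x$ is equivalent to the nontrivial part of the Floquet spectrum lying strictly inside the unit disk, with $1$ a simple Floquet multiplier (the one forced by $x'(t)$).

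The key steps, in order, would be: (i) recall that $1$ is always a Floquet multiplier of a periodic orbit (eigenfunction $x'$), and that slow oscillation forces a sign/ordering structure on the linearized flow — one invokes a discrete Lyapunov functional counting sign changes, in the spirit of Mallet-Paret–Sell, so that the dominant real Floquet multiplier is simple and positive and the subdominant behaviour is controlled; (ii) conclude that for a SOPS the Floquet multiplier $1$ is \emph{simple} and is the \emph{largest} multiplier in modulus, so under the asymptotic-stability hypothesis every SOPS is hyperbolic transverse to its orbit and in fact the period map has a spectral gap; (iii) use this hyperbolicity to show that the ``branch'' of SOPS through any given one is locally a $C^1$ curve with no folds, i.e. it cannot turn back, so the period $L$ (or $\alpha$, depending on the parametrization) is a local coordinate along it; (iv) combine local non-degeneracy with the global a priori bounds on SOPS (SOPS exist for every $L>4$, amplitudes bounded, no SOPS escapes to the trivial solution when $\alpha>\pp$, compactness of the set of SOPS at fixed $\alpha$) to run a continuation/connectedness argument showing the set of SOPS at parameter $\alpha$, modulo time translation, is connected; (v) finally, on a connected one-dimensional family of hyperbolic periodic orbits an index or winding-number count (a fixed-point index argument for the Poincaré-type return map, each asymptotically stable SOPS contributing index $+1$) forces the family to reduce to a single orbit — two distinct hyperbolic SOPS on the same branch would produce incompatible indices or, alternatively, force an intermediate non-hyperbolic SOPS, contradicting asymptotic stability of \emph{all} SOPS.

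I expect the main obstacle to be step (iv)–(v): turning local non-degeneracy of each SOPS into a \emph{global} uniqueness statement. Local hyperbolicity only tells you the branch is a smooth arc near each point; ruling out a second, disconnected component (an isola) of SOPS requires a genuinely global input — this is exactly where Xie uses the a priori estimates on the shape of SOPS together with the fixed-point index / degree theory of the return map, arguing that the total index of the set of SOPS is forced to be $1$ while each asymptotically stable hyperbolic SOPS contributes $1$, so there can be only one. Making the index computation rigorous in the infinite-dimensional DDE setting (choosing the right Poincaré section transverse to all SOPS simultaneously, ensuring compactness and admissibility of the return map, handling the ejective/non-ejective dichotomy of the zero solution) is the technical heart, and I would lean on Xie's thesis \cite{xie1991thesis} and \cite{xie1993uniqueness} for the precise formulation; the contribution of the present paper is then purely to verify the hypothesis — asymptotic stability of every SOPS for $\alpha\in[1.9,6.0]$ — rather than to reprove this implication.
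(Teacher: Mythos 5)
First, a point of reference: the paper does not prove Theorem~\ref{prop:Xie} at all --- it is imported wholesale from Xie's thesis and paper \cite{xie1991thesis,xie1993uniqueness}, and the entire contribution of the present work is to verify its hypothesis. You correctly recognize this in your closing paragraph, so your proposal is really a reconstruction of Xie's argument, and it should be judged as such.

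Your reconstruction gets the decisive mechanism right --- a fixed-point index count for the return map $\Phi$ on a section of slowly oscillating initial data, with the total index pinned at $1$ (via ejectivity of the trivial solution for $\alpha>\pp$ and the Browder--Nussbaum ejective fixed point theory) and each asymptotically stable fixed point contributing local index $+1$, so that additivity forces exactly one fixed point. But steps (iii)--(iv), the continuation/connectedness argument, are both unnecessary and unavailable, and as written they make the argument circular. Connectedness of the set of SOPS at fixed $\alpha$ is precisely what is \emph{not} known: ruling out isolas is the open content of Conjecture~\ref{prop:RemainingConjecture} and of part (2) of the decomposition of Jones' conjecture described in the introduction, so you cannot invoke it as an ingredient in proving uniqueness. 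The point of the index argument is exactly that it does not need connectedness --- the index is additive over the (compact, hence finite when all fixed points are isolated by stability) disjoint union of fixed points, whether or not they lie on a common branch. Relatedly, step (ii) overreaches: asymptotic stability of a fixed point of a compact map does not imply hyperbolicity or a spectral gap, but none is needed, since the classical result that an asymptotically stable isolated fixed point of a compact map has index $1$ holds without it. Strip out (iii)--(iv), replace ``hyperbolic'' by ``asymptotically stable, hence isolated with index $1$,'' and your step (v) is essentially Xie's proof.
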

Following Xie's approach, we define the function space
\[
\cX \bydef  \left\{ x \in C^1 ( \R,\R) \mid x(0) = 0, x'(0) > 0 \mbox{ and } x(t) <0 \mbox{ for } t \in (-1,0)     \right\}.
\]
Up to a time translation, the space $ \cX$ contains all SOPS to Wright's equation. 
Xie showed that if $ x \in \cX$ is a SOPS to Wright's equation with period $L$, then its nontrivial Floquet multipliers $\lambda \in \C$ are given by solutions to the nonautonomous linear  DDE:
	\begin{equation}
	y'(t) = - \alpha f'(x(t-1)) y(t-1) \label{eq:LinearizedIntro} 
	\end{equation}
subject to the boundary condition
\begin{equation}
\lambda y(s) =  - y(L) \frac{x'(s+L)}{x'(L)} + y(s+L), \qquad  s \in [-1,0].
\label{eq:MultiplierIntro}
\end{equation}
For a SOPS $  x \in \cX$, showing that $ |\lambda | < 1$ for all possible solutions $ y$ to \eqref{eq:LinearizedIntro} and \eqref{eq:MultiplierIntro} it suffices to show that $x$ is asymptotically stable. 
By doing so for all possible SOPS to Wright's equation when $ \alpha \geq 5.67$, Xie achieved his proof for uniqueness. 
Xie's method has two parts: (1) obtain estimates on SOPS to Wright's equation and (2) use these estimates to develop 
an upper bound on the magnitude of their Floquet multipliers.  
Xie was only able to obtain a proof for $\alpha \geq 5.67$ because of the difficulty of the first part. 
In this paper we continue Xie's method by means of a computer-assisted proof.

Our approach to obtaining bounds on SOPS is based on an algorithmic case-by-case analysis of the locations of the zeros of a function $  x \in \cX$ and the size of its extrema. 
In \cite[Lemmas 4 and 5]{wright1955non} it is shown that if $ x \in \cX$  and $ \alpha > 1$ then the zeros $ \{ z_i ( x) \}_{i=0}^{\infty}$ of $ x$ are countably infinite and $ z_{i+1}(x) - z_{i}(x) >1$.  
This result implies that we can define the maps $q:\mathcal{X} \to (1,\infty)$ and $\bar q:\mathcal{X} \to (1,\infty)$ 
as follows given $x \in \mathcal{X}$:
\begin{align*}
	q(x) &\bydef z_1(x) -z_0(x),\\
	\bar{q}(x) &\bydef z_2(x) - z_1(x).
\end{align*}
By construction, if $ x \in \cX$, then its first zero is $ z_0(x) = 0$. 
Moreover, if $x$ is a SOPS then $ q(x) + \bar{q}(x)$ is its period and furthermore  if it solves  \eqref{eq:FeedBack}, then its extrema are given as 
\begin{align*}
	\max_{t \in \R} x(t) &= x(1), \\
	\min_{t \in \R} x(t) &= x\left( q(x) + 1 \right).
\end{align*}

In \cite{neumaier2014global} a branch and bound algorithm is applied to the 2-dimensional domain $\{ \max x , \min x\}$ 
to show that there do not exist any SOPS to Wright's equation for $ \alpha \leq 1.5706$, making substantial progress on Wright's conjecture that the origin is the global attractor to~\eqref{eq:Wright} for $ \alpha < \pp \approx 1.57079$.   
Without an exact value for $ q(x)$, one cannot  pinpoint the location of the minimum of $ x$. 
To account for this ambiguity, the authors in \cite{neumaier2014global} use a  collection of six different functions to bound $x$, each defined relative to one of the zeros $\{ z_0(x), z_1(x) , z_2(x) \}$.
We use an alternative approach that allows us to work with just two bounding functions. 
In particular, we classify the space $ \cX$ according to the finite dimensional reduction map  $ \kappa : \cX \to \R^3$ defined as follows:
\begin{equation}
 \kappa( x) \bydef \{ q(x), \bar{q}(x) , x(1) \} .
\label{eq:Projection}
\end{equation}
Relative to a SOPS's image under $\kappa$, we formally define bounding functions as follows.
\begin{definition}
	Fix an interval $I_\alpha = [ \alpha_{min} , \alpha_{max}]$ and a region $ K \subset \R^3$. The functions $ \ell_K,u_K:\R\to \R$ are   \emph{bounding functions} (associated with $K$) if 
	\[
	\ell_K(t) \leq x(t) \leq u_K(t), \quad \text{for all } t \in \R, 
	\]
	whenever $x \in \cX$ is a SOPS to Wright's equation at a parameter $ \alpha \in I_\alpha$ satisfying $ \kappa(x) \in K$. 
\end{definition}

In practice, we define the functions $ u_K$, $\ell_K$ as  piecewise constant functions, which are easy to represent and rigorously integrate on a computer.  
To ensure proper mathematical rigor and computational reliability, we have used interval arithmetic for the execution of our computer-assisted proofs \cite{rump1999intlab,moore1966interval}. 
Notably, our algorithms use a rigorous numerical integrator for delay differential equations, 
about which there is a growing literature \cite{neumaier2014global,minamoto2010numerical,szczelina2014rigorous,szczelina2016algorithm}.      
These computational details are discussed further in Appendix \ref{sec:Appendix}. 

To summarize by Theorem \ref{prop:Xie}, in order to prove that there is a unique SOPS, it is sufficient to show that every SOPS is asymptotically stable.  
This breaks into two major parts: characterizing SOPS to Wright's equation and bounding their Floquet multipliers.  
To accomplish the first part,  we begin by constructing  compact regions $ K_1, K_2  \subset \R^3$, described in  Algorithm \ref{alg:InitialBoundsleq3} and Algorithm \ref{alg:InitialBoundsgeq3} respectively, for which $K_1 \cup K_2$ contains the $\kappa$-image of all SOPS to Wright's equation. 
We then use a \emph{branch and prune} method, defined in Algorithm \ref{alg:BranchAndPrune}, to refine these initial global bounds. 
This algorithm \emph{branches} by subdividing $ K_1 \cup K_2 $ into smaller pieces, and \emph{prunes} by using Algorithm \ref{alg:Prune} to develop tighter bounding functions.  
The end result of this process is a collection $\cA$ of subsets of $K \subset \R^3$, and in Theorem \ref{prop:BranchNPrune} we prove for a given parameter range $ [\alpha_{min} , \alpha_{max}]$ that if $ x \in \cX$ is a SOPS then $ \kappa(x) \in \bigcup_{K \in \cA} K$. 
The task then becomes to show that every SOPS is asymptotically stable.  
For a given region $K \subset \R^3$, we use Algorithm \ref{alg:FloquetBound}  to derive a bound on the Floquet multipliers of any SOPS with $\kappa$-image contained in $K$. 
This is then combined with the branch and prune method in Algorithm \ref{alg:Main}. 
Finally, the proof to Theorem \ref{prop:MainResult} is given in Section 6, where, in addition, we discuss the computational limitations of our approach.

\section{A computational approach }
\label{sec:BrandAndPrune}

Theorem \ref{prop:Xie} effectively transforms Jones's Conjecture (Conjecture~\ref{Jones Conjecture}) into the problem of studying the asymptotic dynamics of  SOPS, and in turn, their Floquet multipliers / Lyapunov exponents.
In a neighborhood about a periodic function, one can develop estimates on these Floquet multipliers \cite{castelli2013rigorous,xie1991thesis}. 
However these bounds rely significantly on this  neighborhood about the periodic function being relatively small.  
In effect, Xie shows that any SOPS to Wright's equation is stable for each $ \alpha \geq 5.67$ by first showing that all such solutions reside within a narrow region, and subsequently shows that all periodic orbits in that region are asymptotically stable.  
This first step is the more difficult part, and the reason Xie restricts his proof to $ \alpha  \geq  5.67$.

In Xie's thesis \cite{xie1991thesis} a case-by-case analysis is used to obtain   a region within which all SOPS must lie.  
Specifically, if $\bar{q}(x) \geq  3$ then  asymptotic analysis \cite{nussbaum1982asymptotic} precisely describes the approximate form of the SOPS with tight error estimates. 
For the alternative case, Xie divided the possibility of $ \bar{q}(x) <3$ into several sub-cases and showed that each of these led to a contradiction when $ \alpha \geq 5.67$.   
In our analysis we   make similar assumptions by considering a SOPS's image under the map $ \kappa(x) = \{ q(x), \bar{q}(x) , x(1) \}$ and the bounding functions associated with various regions $K \subset \R^3$. 
For any region $ K \subset \R^3$ there is not a unique choice of bounding functions.  
In fact, we develop techniques which iteratively tighten the bounding functions for a fixed region $K$. 
If in our process of tightening bounding functions we derive a contradiction, such as $\ell_K(t) > u_K(t)$, then we may conclude that there does not exist any SOPS $x $ for which $ \kappa(x) \in K$.

In performing a case-by-case analysis of SOPS to Wright's equation, we are principally concerned with bounding all possible SOPS, and  we find it useful to introduce the notion of an $I_\alpha$-exhaustive set. 
\begin{definition}
	\label{def:IalphaEx}
	Fix an interval $I_\alpha = [ \alpha_{min} , \alpha_{max}]$ 
	and consider a set $K  \subset \R^3$. 
	The set $K$ is   \emph{$I_\alpha$-exhaustive} if $ \kappa(x) \in K$ for any SOPS $x \in \cX$ to Wright's equation at parameter $ \alpha \in I_\alpha$. 
\end{definition}
 
To derive a sufficiently  small $ I_{\alpha}$-exhaustive set we employ techniques from global optimization theory.   
Specifically, we use a branch and prune algorithm which is derived from the classical global optimization technique of branch and bound  \cite{scholz2011deterministic,ratschek1988new,horst2013global}.    
Our branch and prune is designed so that it will output an  $I_\alpha$-exhaustive set, a result proved in Theorem~\ref{prop:BranchNPrune}.

The branch and prune algorithm begins with an initial finite set $\cS = \{ K_i : K_i \subset \R^3 \}$ for which $ \bigcup_{K \in \cS} K$ is  $I_\alpha$-exhaustive. 
The construction of this initial set is described in Section \ref{sec:InitialBounds}, specifically in Algorithms \ref{alg:InitialBoundsleq3} and \ref{alg:InitialBoundsgeq3}.  
We then alternate between branching and pruning the elements of $ \cS$. 
The branching subroutine divides an element $K \in \cS$ into two pieces $K_A$ and $K_B$ for which $ K = K_A \cup K_B$, and then replaces $K $ in the set $ \cS$ by the two smaller regions. 
The pruning algorithm uses a variety of techniques to derive sharper bounding functions on the region $K$. 
Furthermore, if we can prove that the preimage $ \kappa^{-1}(K) \subseteq \cX$ cannot contain any SOPS, then we remove the region $K$ from the set $ \cS$. 
The branch and prune algorithm terminates when the diameter of every region $K$ is less than some preset constant.

In contrast to the    prototypical optimization problem of bounding the minimum of an objective function, we are concerned with characterizing SOPS to Wright's equation. 
In particular, our pruning algorithm is designed to tighten the bounding functions associated with a region $K$, reduce the size of $K$, and to discard the region if we can prove that $ \kappa^{-1}(K)$ does not contain any SOPS.  
The algorithm takes as input an interval $I_\alpha$, a region $K$, and a pair of bounding functions $u_K, \ell_K$. 
As output the algorithm produces a region $ K' \subset K$ and a pair of bounding functions $ u_{K'} , \ell_{K'}$. 
The set $K$ is taken to be rectangular, that is $ K = I_q \times I_{\bar{q}} \times I_M$ where $I_q =[q_{min}, q_{max} ]$ and $I_{\bar{q}} = [ \bar{q}_{min}, \bar{q}_{max}]$ and $ I_M = [ M_{min}  , M_{max}]$. 
Additionally, this algorithm takes as input a computational parameter $ n_{Time} \in \N$ relating to how we store the bounding functions $u_K , \ell_K$ on the computer (see Appendix \ref{sec:Appendix}).

The six steps in the pruning algorithm (Algorithm \ref{alg:Prune}) are independent of one another  and can be implemented in any order. 
In Steps 1-4 we describe how to tighten the bounds on $K$, $u_K$ and $\ell_K$.  
Each step is constructed so that the output does not worsen the existing bounds. 
That is each step of the algorithm produces an output for which $ K' \subseteq K$ and the inequalities $ u_{K'} \leq u_K$ and $ \ell_{K'} \geq \ell_K$ hold. 
At the end of each step we update our input so that we use the improved bounds in the next step.  
That is, we define:
\begin{align}
K&\bydef K'  & u_{K} &\bydef u_{K'} & \ell_{K} &\bydef \ell_{K'}  
\label{eq:ImprovedNewBounds}
\end{align}
and subsequently modify $K'$, $u_{K'}$ and $ \ell_{K'}$ as described in each individual step. 
In Steps 5-6, we check conditions which would imply that the region $K$ cannot contain the $ \kappa$-image of SOPS to Wright's equation.  
If this is the case, the algorithm returns $ K = \emptyset$.  

\begin{algorithm}[Pruning Algorithm]
	\label{alg:Prune}
	This algorithm takes as input $ I_ \alpha = [ \alpha_{min} , \alpha_{max}]$,  
	$ K =  [q_{min}, q_{max} ] \times [ \bar{q}_{min}, \bar{q}_{max}] \times  [ M_{min}  , M_{max}] \subseteq \R^3$   and associated bounding functions $ \ell_K$ and $u_K$, as well as the computational parameter $ n_{Time} \in \N$.   
	The outputs consist of a region $ K' \subseteq \R^3$ and associated bounding functions $ \ell_{K'}$ and $ u_{K'}$. \newline
	
	\noindent
	Define $ I_q = [ q_{min}, q_{max}] $, $I_{\bar{q} } = [\bar{q}_{min} , \bar{q}_{max}]$ and $ I_{M} = [M_{min},M_{max}]$ as well as   $L_{min} \bydef q_{min} + \bar{q}_{min}$, $L_{max} \bydef q_{max} + \bar{q}_{max}$ and $I_L \bydef [ L_{min},L_{max}]$.  
	 
	
	\begin{enumerate}
		\item 
		We tighten the bounding functions associated with the region $K$ using 
		\begin{align}
		u_{K'} (t) &\bydef
		\begin{cases}
		\min \{ M_{max} , u_K(1) \}  & \mbox{ if } t=1\\
		\min\{0,u_K(t)\} & \mbox{ if } t \in [- \bar{q}_{min}, 0] \cup   [q_{max},L_{min}] \\
		u_K(t) & \mbox{ otherwise}
		\end{cases} 	\label{eq:Kref2a} \\
		\ell_{K'} (t) &\bydef
		\begin{cases}
		\max \{ M_{min} , \ell_K(1) \} & \mbox{ if } t=1\\
		\max\{0,\ell_K(t)\} & 
		\mbox{ if }			 t \in [-L_{min},\bar{q}_{max}] \cup  [0,q_{min}] \cup   [L_{max},L_{min}+ q_{min}] \\
		\ell_K(t) & \mbox{ otherwise.}
		\label{eq:Kref2b}
		\end{cases}
		\end{align}
		Lastly we update our bounds using Line \eqref{eq:ImprovedNewBounds}.
		
		\item  
		If $x$ satisfies Wright's equation we can use variation of parameters to refine the bounding functions.  
		For our computational parameter $ n_{Time} \in \N$, we base our calculation about a collection of points separated by a uniform distance of $ 1/n_{Time}$.  
		That is, define $ \Delta = 1/n_{Time}$ and $ I_\Delta \bydef [0,\Delta]$, and fix  $t_0 \in \{ k \cdot \Delta\}_{k \in \Z}$ and $ s \in I_\Delta$. 
		We may refine the values of 
		$ u_K(t_0+s)$, 
		$ u_K(t_0-s)$, 
		$ \ell_K(t_0+s)$, 
		$ \ell_K(t_0-s)$ as follows: 
		\begin{align} 
		u_{K''}(t_0+s) &\bydef u_K(t_0 ) 
		+ s \cdot \sup_{\alpha \in I_\alpha , r \in I_{\Delta} } 
		\sup_{ \ell_K \leq x \leq u_K}
			- \alpha  \left( e^{  x ( t_0 -1 + r) }-1 \right) \label{eq:RiemmanUpper} \\ 
		u_{K''}(t_0 - s) &\bydef u_K(t_0 )  - s \cdot \inf_{\alpha \in I_\alpha , r \in I_{\Delta} } 
		\inf_{ \ell_K \leq x \leq u_K}
		 - \alpha  	\left( e^{  x ( t_0 -1 - r) }-1 \right) \nonumber \\ 
		 \ell_{K''}(t_0 + s) &\bydef \ell_K(t_0 )  + s \cdot
		  \inf_{\alpha \in I_\alpha , r \in I_{\Delta} }
		  \inf_{ \ell_K \leq x \leq u_K}
		 - \alpha  	\left( e^{  x ( t_0 -1 + r) }-1 \right) \nonumber  \\
		 \ell_{K''}(t_0 - s) &\bydef \ell_K(t_0 )  - s \cdot 
		 \sup_{\alpha \in I_\alpha , r \in I_{\Delta} }
		 \sup_{ \ell_K \leq x \leq u_K}
		 - \alpha  	\left( e^{  x ( t_0 -1 - r) }-1 \right) \nonumber 
		\end{align}
		and
		\begin{align*} 
		u_{K'}(t_0+s)  &\bydef \min \left\{ u_K(t_0+s) , u_{K''}(t_0+s)    \right\} \\
		u_{K'}(t_0 - s)  &\bydef \min \left\{ u_K(t_0-s) , u_{K''}(t_0-s)    \right\} \\
		\ell_{K'}(t_0+s)  &\bydef \max \left\{ \ell_K(t_0+s) , \ell_{K''}(t_0+s)    \right\} \\ 
		\ell_{K'}(t_0-s)  &\bydef \max \left\{ \ell_K(t_0-s) , \ell_{K''}(t_0-s)    \right\} .
		\end{align*}
		
		Appendix \ref{sec:Appendix} explains in further detail the computational aspects of this step.
		Lastly we update our bounds using Line \eqref{eq:ImprovedNewBounds}.
		
		\item 	
		In this step we refine our bounds on $I_q$ and $I_{M}$ using $ u_K$ and $ \ell_K$.  
		At $ t = q(x)$ the function $x(t)$ changes sign from positive to negative. 
		We sharpen the bounds on $I_q$ by defining:
		\begin{align}
		q'_{min} &\bydef \inf \{ t \in I_q : \ell_K(t) \leq 0 \} \nonumber \\
		q'_{max} &\bydef \sup \{ t \in I_q : u_K(t) \geq 0 \} 	\nonumber \\
		I_{q'}  &\bydef [ q'_{min} , q'_{max} ]. \label{eq:QFind} 
		\end{align}
		Additionally we make the following refinement:
		\[
		I_{M'} \bydef [M_{min} , M_{max}] \cap [\ell_{K}(1) , u_{K}(1)].
		\]
		Lastly we define $K' \bydef I_{q'} \times I_{\bar{q}} \times I_{M'}$ and update our bounds using Line \eqref{eq:ImprovedNewBounds}. 
		
		\item 
		If $ x \in \cX$ is a SOPS with period $L \in I_L$, then $ x(t) = x(t +L)$. 
		Using this relation, we make the following refinement: 
		\begin{align}
		\ell_{K'}(t) &\bydef  \max \left\{ \ell_K(t) , \min_{L' \in I_L}  \ell_K( t + L') \right\}  \label{eq:Alg1Period1} \\	
		u_{K'}(t)    &\bydef  \min \left\{ u_K(t) , \max_{L' \in I_L}  u_K( t + L') \right\} \label{eq:Alg1Period2}.
		\end{align}	
		Lastly we update our bounds using Line \eqref{eq:ImprovedNewBounds} as appropriate.
		
		\item 
		If there is some point $t\in \R$ for which $ \ell_{K} (t) > u_{K}(t)$ then \emph{RETURN}  $K'  \bydef \emptyset$. 
		
		\item
		If $\min_{ t \in I_q} \ell_K(t + 1)  > - \log \tfrac{\alpha_{min}}{\pi/2}$, then \emph{RETURN} $K'  \bydef \emptyset$.

	\end{enumerate}
	
\end{algorithm}

\begin{proposition}
	Let $ I_\alpha = [ \alpha_{min} , \alpha_{max}]$, $\alpha_{min} \geq  \pp$, 	$$ K =  [q_{min}, q_{max} ] \times [ \bar{q}_{min}, \bar{q}_{max}] \times  [ M_{min}  , M_{max}] \subseteq \R^3$$   and $u_K , \ell_K$ be input for Algorithm \ref{alg:Prune} with any computational parameter $n_{Time} \in \N$. 
	Suppose that $ x \in \cX$ is a SOPS at parameter $\alpha \in I_\alpha$, 
	and   let  $\{K',u_{K'},\ell_{K'}   \} $ be the result of Algorithm \ref{alg:Prune}. 
	If $ \kappa(x) \in K$, then $ \kappa(x) \in K'$ and $ \ell_{K'} \leq x \leq u_{K'}$. 
	\label{prop:Prune}
\end{proposition}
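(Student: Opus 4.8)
The plan is to run through the six steps of Algorithm~\ref{alg:Prune} in order and prove, by induction on the step index, that each step preserves the following invariant on the running data $(K,u_K,\ell_K)$: we have $\kappa(x)\in K$ and $\ell_K(t)\le x(t)\le u_K(t)$ for every $t\in\R$, and the step has not returned $K'=\emptyset$. The base case is exactly the hypothesis of the proposition together with the definition of the bounding functions $u_K,\ell_K$. Once the induction is established, applying it successively to all six steps shows that the final output $(K',u_{K'},\ell_{K'})$ still satisfies $\kappa(x)\in K'$ and $\ell_{K'}\le x\le u_{K'}$, which is the claim. Throughout, I use only the defining structural properties of a SOPS $x\in\cX$ solving \eqref{eq:FeedBack}: its first three zeros are $0$, $q(x)$, $q(x)+\bar q(x)$; $x>0$ on $(0,q(x))$ and $x<0$ on $(q(x),q(x)+\bar q(x))$; $x$ is periodic with period $L=q(x)+\bar q(x)$; and $\max_{t\in\R}x(t)=x(1)$, $\min_{t\in\R}x(t)=x(q(x)+1)$.

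For Steps~1--4 I verify directly that the prescribed replacements cannot break the invariant. In Step~1, since $\kappa(x)\in K$ gives $q(x)\le q_{max}$, $\bar q(x)\le\bar q_{max}$, $L\ge L_{min}$, and $x(1)\in I_M$, each interval on which $0$ is imposed as a new upper (respectively lower) bound lies inside an interval where $x\le 0$ (respectively $x\ge 0$), and $\min\{M_{max},u_K(1)\}\ge x(1)\ge\max\{M_{min},\ell_K(1)\}$; hence the new $u_{K'},\ell_{K'}$ still bound $x$. In Step~2, for $t_0\in\Delta\Z$ and $s\in I_\Delta$ I write $x(t_0\pm s)=x(t_0)\pm\int_0^s x'(t_0\pm r)\,dr$ with $x'(t_0\pm r)=-\alpha\big(e^{x(t_0-1\pm r)}-1\big)$, and bound the integrand, for each $r\in[0,s]\subseteq I_\Delta$, by its supremum (resp. infimum) over $\alpha\in I_\alpha$ and over the admissible range $[\ell_K(t_0-1\pm r),u_K(t_0-1\pm r)]$ of the quantity $x(t_0-1\pm r)$; combined with $\ell_K(t_0)\le x(t_0)\le u_K(t_0)$ this yields the four formulas of Step~2, after which taking pointwise $\min$/$\max$ against the old bounds preserves them. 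In Step~3, $q(x)\in I_q$ and $\ell_K(q(x))\le x(q(x))=0\le u_K(q(x))$ place $q(x)$ in both of the sets defining $q'_{min}$ and $q'_{max}$, so $q'_{min}\le q(x)\le q'_{max}$; moreover $x(1)\in I_M\cap[\ell_K(1),u_K(1)]=I_{M'}$ and $\bar q(x)$ is unchanged, so $\kappa(x)\in I_{q'}\times I_{\bar q}\times I_{M'}=K'$. In Step~4, $L=q(x)+\bar q(x)\in I_L$ and $x(t)=x(t+L)$ give $x(t)\ge\ell_K(t+L)\ge\min_{L'\in I_L}\ell_K(t+L')$ while also $x(t)\ge\ell_K(t)$, so $x(t)\ge\ell_{K'}(t)$, and symmetrically $x(t)\le u_{K'}(t)$.

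For Steps~5--6 it remains to check that the ``RETURN $K'=\emptyset$'' branch is never taken, so that the invariant carries through unchanged. In Step~5, the inequality $\ell_K(t)\le x(t)\le u_K(t)$ for all $t$ — part of the invariant inherited from Steps~1--4 — directly rules out the existence of a $t$ with $\ell_K(t)>u_K(t)$. In Step~6, I use that $x(q(x)+1)=\min_{t\in\R}x(t)$ together with the a priori estimate $\min_{t\in\R}x(t)\le-\log\big(\alpha/(\pi/2)\big)$ valid for any SOPS of Wright's equation at parameter $\alpha$; since $\alpha\ge\alpha_{min}$ this gives $\min_{t\in\R}x(t)\le-\log\big(\alpha_{min}/(\pi/2)\big)$, and since $q(x)\in I_q$ and $\ell_K(q(x)+1)\le x(q(x)+1)$ it follows that $\min_{t\in I_q}\ell_K(t+1)\le-\log\big(\alpha_{min}/(\pi/2)\big)$, contradicting the rejection condition. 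Hence neither step returns $\emptyset$, and the induction — and with it the proposition — is complete.

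I expect the only genuinely delicate point to be Step~2: one must be careful that the bound is taken over the full range of admissible trajectory values on the whole sub-interval $[t_0-1,\,t_0-1+\Delta]$ (not merely at a single point), that the shift variable $r$ indeed ranges over $[0,s]\subseteq I_\Delta$, and that the supremum/infimum over the compact set $I_\alpha\times I_\Delta\times\{x:\ell_K\le x\le u_K\}$ is attained — which follows from the continuity and monotonicity of $\xi\mapsto-\alpha(e^\xi-1)$ — so that replacing the integral of $x'$ by $s$ times that supremum is legitimate. Everywhere else the argument is a routine unwinding of the definition of a SOPS and of the fundamental theorem of calculus; the only ingredient imported from outside is the classical a priori lower bound on $\min_{t\in\R}x(t)$ invoked in Step~6.
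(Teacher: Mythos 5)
Your proposal is correct and follows essentially the same route as the paper: a step-by-step verification that each of the six pruning operations preserves the invariant $\kappa(x)\in K$ and $\ell_K\le x\le u_K$, using the sign structure of a SOPS for Step 1, variation of parameters with a one-step Riemann bound for Step 2, periodicity for Step 4, and Walther's a priori bound on $\min x$ for Step 6. The only (immaterial) differences are presentational — your Step 3 places $q(x)$ directly in the sets defining $q'_{min},q'_{max}$ rather than excluding it from their complements, and your Steps 5--6 argue the contrapositive of the paper's formulation.
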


\begin{proof}

We prove that Proposition \ref{prop:Prune} holds for each step of the algorithm individually. 
Given an interval $ I_\alpha \subset \R$, let $ x \in \cX$ be a SOPS at parameter $\alpha \in I_\alpha$.

\begin{enumerate}
	
	\item  
		Recall $ \kappa(x) = \{ q(x) , \bar{q} (x) , \max(x) \}$. 
		Since $\max_{t \in \R} x = x(1)$ then the  refinements in  \eqref{eq:Kref2a} and \eqref{eq:Kref2b} for the case in which $ t = 1$ are appropriate.  
		For the other two refinements in each equation note that by definition  a function $ x \in \cX$ is non-negative on the interval $ [0 ,q(x)]$ and non-positive on the interval $ [ q(x), q(x) + \bar{q}(x) ]$. 
		Hence, $x$ is non-negative on $ [0 , q_{min}]$ and non-positive on $ [q_{min}, L_{min}]$. 
	If $x$ is a SOPS then it has period $L=q(x) + \bar{q}(x) $ and we may further conclude that it is non-negative on the intervals $ [-L ,\bar{q}(x)]$ and $ [L ,L+q(x)]$, and non-positive on the interval $ [ -\bar{q}(x), 0 ]$.  
	Hence, $x$ is non-negative on $ [-L_{min} , \bar{q}_{max}]$ and $ [L_{max} , L_{min} +q_{min}]$, and non-positive on $ [ - \bar{q}_{min} ,0]$. 
 The refinements in \eqref{eq:Kref2a} and \eqref{eq:Kref2b}  reflect these restrictions.

	\item 	
		
		To estimate an upper bound on $ x(t_0 +s)$, we apply variation of parameters to Wright's equation, obtaining 
		\begin{equation}
			x(t_0 + s) = x(t_0) + \int_{t_0}^{t_0 + s} - \alpha \left( e^{ x(r-1)} -1 \right) dr.
		\end{equation}
		Taking the Riemann upper sum of this integral with step size $ s$, we deduce that $x(t_0 + s)$ is bounded above by the RHS of  \eqref{eq:RiemmanUpper}.  
		As $ x(t_0 +s) \leq u_K(t_0+s)$ it follows that $x(t_0 +s ) \leq \min \{ u_{K}(t_0+s) , u_{K''}(t_0+s)  \}$.  
		The proofs for the refinements of $ u_K(t_0-s), \ell_K(t_0 +s), \ell_K(t_0-s)$ follow with parity.

	\item  
	Let $ x \in \cX$ be such that $ \kappa(x) \in K$. 
	From our definitions of $ q'_{min}$ and $q'_{max}$ it follows that 
	\begin{align*}
		x(t) \geq \ell_K(t) > 0, & \quad \mbox{for all } t \in  \left( q_{min} , q_{min}' \right), \\
		x(t) \leq u_K(t) < 0, & \quad \mbox{for all } t \in  \left( q_{max}' , q_{max}\right).
	\end{align*} 
	Hence it follows that $x(t) \neq 0$ for $ t \in ( q_{min} , q_{min}') \cup  ( q_{max}' , q_{max})$.    
	Since $ q(x) \in I_{q}$,   it must  follow that $ q(x) \in [ q_{min}', q_{max}']$, thus justifying the refinement in \eqref{eq:QFind}. 
	Regarding the refinement of $ I_M$, as $ \ell_K(1) \leq x(1) \leq u_K(1)$ it clearly follows that $ [ \kappa(x) ]_3 = x(1) \in I_{M'} $.

	\item  
		If $x$ is periodic with period $L$, then $ x (t) = x(t + L)$. Since $ L \in I_L$ then we may derive upper/lower bounds on  $x(t + L) $ as follows: 
	\[
 \min_{L'\in I_L} \ell_K(t+L') \leq \min_{L' \in I_L} x(t+L') \leq 	x(t+L) \leq \max_{L' \in I_L} x(t+L') \leq \max_{L'\in I_L} u_K(t+L').
	\]
	Hence it follows that $ \min_{L'\in I_L} \ell_K(t+L') \leq \ell_K(t)$ and $u_K(t) \leq \max_{L'\in I_L} u_K(t+L')$, thus justifying our refinements  in~\eqref{eq:Alg1Period1} and \eqref{eq:Alg1Period2}.

	\item 
		If $ \ell_K(t) >  u_K(t)$,  then it is impossible for any $x \in \cX$ to satisfy $ \ell_K(t) \leq x(t) \leq u_K(t)$. 
		Since $u_K$, and $\ell_K$ are bounding functions associated with $K$, this contradiction leads us to conclude that there cannot exist any SOPS $x \in \cX$ for which $ \kappa(x) \in K$.

	\item
	 	By the results in \cite{walther1978theorem}, if $x$ is a SOPS to Wright's equation and $ \alpha \geq \pp$, then 
\begin{equation}
	 	\min x \leq - \log   \tfrac{ \alpha}{\pi /2}.  \label{eq:Walther}
\end{equation}
	 	If $x \in \cX$ is a SOPS then $ \min_{t \in \R} x(t) = x(q +1)$, whereby $ \min_{t \in \R} x(t) > \min_{ t \in I_q} \ell_k(t + 1)  $. 
	 	Hence, if $ \min_{ t \in I_q} \ell_k(t + 1)  >  - \log   \tfrac{\alpha_{min}}{\pi/2 } $ then  \eqref{eq:Walther} is violated, and so there cannot exist any SOPS $x \in \cX$ for which $ \kappa (x) \in K$.  \qedhere

\end{enumerate}

\end{proof}

\section{Initial Bounds on SOPS to Wright's Equation}
\label{sec:InitialBounds}

In order to apply the branch and prune algorithm, we must first construct an initial $I_\alpha$-exhaustive set. 
Due to the sustained interest in Wright's equation, there are considerable \emph{a priori} estimates we can employ to describe slowly oscillating solutions \cite{jones1962nonlinear,wright1955non,nussbaum1982asymptotic}. 
Since considerably sharper estimates are obtained under the assumption $ \bar{q} \geq 3$, we will construct two regions $K_1$ and $K_2$ 
corresponding to SOPS $ x \in \cX$ for which $ \bar{q} (x) \leq 3$ and $ \bar{q} (x) \geq 3$ respectively. 
Taken together $K_1\cup K_2$ will form an $I_\alpha$-exhaustive set, which we prove in Corollary \ref{prop:ConstructInitialBounds}.

While sharper estimates are available for additional sub-cases \cite{nussbaum1982asymptotic},  we present a collection of these estimates we have found sufficient for our purposes. 
Note that these lemmas are not a verbatim reproduction. We have translated results applicable to the quadratic form of Wright's equation given in \eqref{eq:Wright} so that they apply to the exponential form of Wright's equation given in \eqref{eq:FeedBack}.

\begin{lemma}[See \cite{wright1955non}] 
 Let $ x \in \cX$ be a solution to Wright's equation at parameter $ \alpha >0$. 
 Then 
 \[
 - \alpha ( e^{\alpha} -1) \leq x(t) \leq \alpha
 \]
 for all $ t >0 $.
 	\label{prop:GlobalExtrema}
\end{lemma}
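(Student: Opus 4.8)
The plan is to prove the two inequalities separately, the upper bound first, each by a first-passage contradiction argument built on the variation-of-parameters identity $x(t) = x(t-1) + \int_{t-1}^{t} -\alpha\bigl(e^{x(s-1)}-1\bigr)\,ds$ together with the sign information encoded in the definition of $\cX$. No compactness or fixed-point machinery is needed, and the argument should never invoke $\alpha > 1$, so it covers the full range $\alpha > 0$ claimed.

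First I would record the elementary consequences of $x \in \cX$. Since $x < 0$ on $(-1,0)$, the right-hand side $-\alpha(e^{x(t-1)}-1) = \alpha\bigl(1-e^{x(t-1)}\bigr)$ takes values in $(0,\alpha)$ for $t \in (0,1)$, so $x$ is strictly increasing on $[0,1]$ with $0 < x(1) < \alpha$; in particular $0 < x(t) < \alpha$ throughout $(0,1]$.

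For the upper bound, suppose $x$ attains the value $\alpha$ somewhere on $(0,\infty)$ and let $\tau$ be the first such time; by the previous paragraph $\tau > 1$. Since $x < \alpha$ on $[0,\tau)$ and $x(\tau) = \alpha$, we get $x'(\tau) \ge 0$, which forces $e^{x(\tau-1)} \le 1$, i.e. $x(\tau-1) \le 0$. Variation of parameters on $[\tau-1,\tau]$ then gives $\alpha = x(\tau-1) + \int_{\tau-1}^{\tau}\alpha\bigl(1-e^{x(s-1)}\bigr)\,ds \le \int_{\tau-1}^{\tau}\alpha\bigl(1-e^{x(s-1)}\bigr)\,ds < \alpha$, the last inequality strict because $\alpha e^{x(s-1)} > 0$ pointwise and the interval has length one — a contradiction, so $x(t) < \alpha$ for all $t > 0$. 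The lower bound is symmetric: if $x$ attains $-\alpha(e^{\alpha}-1)$ on $(0,\infty)$ at a first time $\mu$, then $\mu > 1$ because $x > 0$ on $(0,1]$, and $x'(\mu) \le 0$ forces $x(\mu-1) \ge 0$; variation of parameters on $[\mu-1,\mu]$ yields $\int_{\mu-1}^{\mu}\alpha\bigl(e^{x(s-1)}-1\bigr)\,ds \ge \alpha(e^{\alpha}-1)$, while for $s \in (\mu-1,\mu)$ one has $s-1 > -1$ (as $\mu > 1$), hence $x(s-1) < \alpha$ by the upper bound just proved (and by the definition of $\cX$ on $(-1,0)$), so $\alpha\bigl(e^{x(s-1)}-1\bigr) < \alpha(e^{\alpha}-1)$ pointwise and the unit-length integral is strictly smaller — again a contradiction.

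There is no serious obstacle here; the one point demanding care is bookkeeping about the domain on which $x$ satisfies the equation and about evaluating $x(s-1)$ when $s-1$ dips into $[-1,0]$. This is harmless because $x < \alpha$ there as well, but it is precisely why the upper bound must be established before the lower one, since the lower-bound estimate uses $x(s-1) < \alpha$ as an input.
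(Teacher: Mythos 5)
Your proof is correct. The paper does not prove this lemma at all — it only cites Wright's 1955 paper (translating his bounds for the quadratic form $y'=-\alpha y(t-1)(1+y)$ into the exponential variable $x=\log(1+y)$) — and your first-passage argument, integrating the equation over the unit interval preceding a putative extremum and using the sign of $x(\tau-1)$ forced by $x'(\tau)$, is essentially the classical argument of Wright; in particular you correctly establish the upper bound first so that $x(s-1)<\alpha$ is available for the lower bound, and you correctly restrict the use of the differential equation to times $s>0$.
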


\begin{lemma}[See {\cite[Theorem 3.1]{jones1962nonlinear}}] 
	Let $ \alpha > e^{-1}$ and suppose that $ x \in \cX$ and is a solution to Wright's equation. 
	We construct a sequence of functions $  p_i: (-\infty , 1] \to \R $ for $i = 1, 2, \cdots$ 	by setting $p_1(t) = \alpha t $ and recursively defining:
	\[
	p_{i+1} (t)\bydef - \alpha \int_0^t \left( e^{p_i(s-1)} -1 \right) ds.
	\]
	For example $ p_2( t) = \alpha t + e^{-\alpha} -e^{\alpha(t-1)}$. 
	Then $ x(t) > p_i(t)$ for $ t < 0 $, and $ x(t)  < p_i( t )$ for $t \in (0,1]$. Furthermore $ x(t) < p_i(1) $ for all $t \geq 0$. 	 
	Additionally $ |p_i(t)| $ is increasing in $\alpha$. 
	\label{prop:JonesMax}
\end{lemma}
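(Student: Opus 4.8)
The plan is to establish all of the assertions by induction on $i$, taking the inequality $x(t) > p_i(t)$ for $t<0$ as the driving hypothesis and extracting the other conclusions from it along the way. For the base case, the only structural input I would use is that $e^{x(t-1)} > 0$, which forces $x'(t) = \alpha - \alpha e^{x(t-1)} < \alpha$ for every $t \in \R$ and every solution of \eqref{eq:FeedBack}. Since $x(0) = 0$, integrating this bound forward gives $x(t) = \int_0^t x'(s)\,ds < \alpha t = p_1(t)$ for $t \in (0,1]$, and integrating it backward gives $x(t) = -\int_t^0 x'(s)\,ds > -\int_t^0 \alpha\,ds = \alpha t = p_1(t)$ for $t < 0$. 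This settles the case $i = 1$ of the first two assertions.

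For the inductive step I would use the variation-of-parameters identity $x(t) = -\alpha \int_0^t (e^{x(s-1)} - 1)\,ds$, valid for all $t$ because $x(0) = 0$, together with the defining recursion $p_{i+1}(t) = -\alpha \int_0^t (e^{p_i(s-1)} - 1)\,ds$ and the strict monotonicity of $u \mapsto e^u$. Suppose $x(s) > p_i(s)$ for all $s < 0$. For $t \in (0,1]$ the retarded argument $s-1$ ranges over $[-1,0]$, where $x(s-1) \ge p_i(s-1)$ — strictly on $[-1,0)$, with $x(0) = 0 = p_i(0)$ at the endpoint — so $e^{x(s-1)} - 1 \ge e^{p_i(s-1)} - 1$ and, because of the factor $-\alpha < 0$, one gets $x(t) < p_{i+1}(t)$. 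For $t < 0$ one writes $\int_0^t = -\int_t^0$ and notes that $s-1$ now ranges over $(-\infty,-1)$, where again $x(s-1) > p_i(s-1)$; the orientation reversal flips the inequality and yields $x(t) > p_{i+1}(t)$. This closes the induction, proving the first two assertions for all $i$.

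It remains to upgrade the estimate $x(t) < p_i(t)$ on $(0,1]$ to $x(t) < p_i(1)$ on all of $[0,\infty)$, and to obtain the monotonicity in $\alpha$. For the former I would use two facts: that $p_i$ is nondecreasing on $[0,1]$ — which follows from a short auxiliary induction showing $p_i < 0$ on $(-\infty,0)$, whence $p_{i+1}'(t) = -\alpha(e^{p_i(t-1)} - 1) \ge 0$ for $t \in [0,1]$ (and $p_1(t) = \alpha t$ directly) — and the fact that \eqref{eq:FeedBack} is autonomous, so that for every even-indexed zero $z_{2k}(x)$ of $x$ the translate $t \mapsto x(t + z_{2k}(x))$ again lies in $\cX$; here one uses that consecutive zeros of $x$ are more than unit distance apart, so that $x < 0$ on $(z_{2k}(x) - 1, z_{2k}(x))$ and $x'(z_{2k}(x)) > 0$. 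Applying the estimate on $(0,1]$ to each such translate bounds the successive local maxima of $x$ by $p_i(1)$, and on the complementary intervals $x$ is either negative or monotone (its sign and direction of monotonicity being governed by the sign of the delayed value $x(t-1)$), so $x(t) < p_i(1)$ for all $t \ge 0$; for $i=1$ this recovers Lemma~\ref{prop:GlobalExtrema} with a strict inequality. Finally, that $|p_i(t)|$ is increasing in $\alpha$ follows from a third induction along the same recursion, tracking the signs $p_i \le 0$ on $(-\infty,0]$ and $p_i \ge 0$ on $[0,1]$ and using that $u \mapsto 1 - e^{-u}$ is increasing.

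I expect the main obstacle to be precisely this last ``for all $t \ge 0$'' clause: unlike for a SOPS, the supremum of a general $x \in \cX$ over $[0,\infty)$ need not be attained at $t = 1$, so one cannot merely invoke the location of the maximum. It is the translation invariance of the autonomous equation \eqref{eq:FeedBack}, combined with the more-than-unit spacing of the zeros of $x$, that rescues the argument, and this is the step I would treat with the most care; the inductive core, by contrast, is routine.
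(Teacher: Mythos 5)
The paper offers no proof of this lemma: it is imported verbatim (modulo the change of variables $x=\ln(1+y)$) from Jones \cite{jones1962nonlinear}, so there is no in-paper argument to compare against. Your reconstruction is sound and essentially the standard one. The base case $x' = \alpha(1-e^{x(t-1)}) < \alpha$ is right, and the inductive core — feeding the hypothesis $x>p_i$ on $(-\infty,0)$ through the variation-of-parameters identity $x(t)=-\alpha\int_0^t(e^{x(s-1)}-1)\,ds$, with the sign of $-\alpha$ flipping the inequality on $(0,1]$ and the orientation reversal restoring it on $(-\infty,0)$ — is exactly the right mechanism; note that the invariant you must carry is only the $t<0$ half, with the $(0,1]$ half regenerated at each stage, which is how you have set it up. The auxiliary inductions ($p_i<0$ on $(-\infty,0)$, hence $p_i$ nondecreasing and nonnegative on $[0,1]$; $|p_i|$ increasing in $\alpha$ via $1-e^{-u}$) all close correctly.

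You correctly identify the only delicate clause, ``$x(t)<p_i(1)$ for all $t\ge 0$,'' and your translation-invariance argument at the even zeros, combined with the monotonicity of $x$ on the intervals where the delayed value has fixed sign, does work. One caveat worth making explicit: that argument leans on the zero structure of solutions in $\cX$ — infinitely many (or at least well-separated) zeros spaced more than one apart — which the paper cites from \cite{wright1955non} only for $\alpha>1$, whereas the lemma is stated for $\alpha>e^{-1}$. For $e^{-1}<\alpha\le 1$ you would need to supply (or cite from Jones) the corresponding structural facts separately. Since the paper only ever invokes the lemma for $\alpha\ge\pp$, this does not affect anything downstream, but as a freestanding proof of the statement as written it is the one point that is not fully discharged.
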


\begin{lemma}[See  {\cite[Theorem 3.4]{jones1962nonlinear}}]
	\label{prop:MinusOne}
	Let $ x \in \cX$ and suppose that $\bar{q} \geq 3$ and that $ \alpha \geq \pp$. 
	Define $a_1(\alpha) = - ( \alpha -1 )$ and the recursive relation $a_{i+1}(\alpha) =  \alpha (  e^{a_i(\alpha)} -1 ) $. Then  $x(t) < -t \cdot a_i(\alpha)  $ for $t \in [-1,0)$  and $ i \in \N $.
		
\end{lemma}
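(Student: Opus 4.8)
The plan is to use the sign structure of a SOPS to the left of the origin --- which is exactly where the hypothesis $\bar q\ge 3$ enters --- together with variation of parameters applied to \eqref{eq:FeedBack}, closing the estimate by a self-referential inequality evaluated at the single point $t=-1$. Throughout I will treat $x$ as a SOPS of \eqref{eq:FeedBack}, so that it solves Wright's equation and is $L$-periodic with $L=q(x)+\bar q(x)$; since $\alpha\ge\pp>1$ the zeros of $x$ are spaced more than unity apart.

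First I would record the relevant negativity. Periodicity carries the consecutive zeros $z_1(x)=q(x)$ and $z_2(x)=L$ of $x$ to $z_{-1}(x)=-\bar q(x)$ and $z_0(x)=0$; since $x\in\cX$ changes sign from negative to positive at $0$, this gives $x(t)<0$ for $t\in(-\bar q,0)$, hence $x\le 0$ on $[-3,0]$ because $\bar q\ge 3$. From this I would deduce that $x$ is nondecreasing on $[-2,0]$: for $r\in(-2,0)$ the delayed argument $r-1$ lies in $(-3,-1)$, where $x\le 0$, so $x'(r)=-\alpha\bigl(e^{x(r-1)}-1\bigr)\ge 0$.

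Next, integrating \eqref{eq:FeedBack} from $t$ to $0$ gives, for $t\in[-1,0)$,
\[
x(t)=\int_t^0 \alpha\bigl(e^{x(s-1)}-1\bigr)\,ds .
\]
For $s\in[t,0]$ the delayed argument $s-1$ lies in $[-2,-1]$, so monotonicity yields $x(s-1)\le x(-1)<0$, whence $x(t)\le -t\,\alpha\bigl(e^{x(-1)}-1\bigr)$. Evaluating this at $t=-1$ gives the closed scalar inequality $b\le\alpha(e^{b}-1)$ for $b:=x(-1)<0$. What remains is elementary analysis of the map $h(a)=\alpha(e^{a}-1)$: it is strictly convex with $h(0)=0$ and $h'(0)=\alpha>1$, so $h$ has a unique second fixed point $a^\ast<0$ and $h(a)<a$ on $(a^\ast,0)$; hence $b\le h(b)$ with $b<0$ forces $x(-1)\le a^\ast$. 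Moreover $a_1=-(\alpha-1)>a^\ast$, since $\log\alpha<\alpha-1$ gives $h(a_1)=\alpha e^{-(\alpha-1)}-\alpha<1-\alpha=a_1$, and as $h$ is increasing with $h(a^\ast)=a^\ast$, the recursion $a_{i+1}=\alpha(e^{a_i}-1)$ keeps $a_i>a^\ast$ for every $i$. Combining, for $t\in[-1,0)$ and any $i\in\N$, using that $h$ is increasing and $-t>0$,
\[
x(t)\le -t\,h\bigl(x(-1)\bigr)\le -t\,h(a^\ast)=-t\,a^\ast<-t\,a_i ,
\]
which is the claimed bound.

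I expect the real obstacle to lie in the structural input, not in the estimates: one must know that $x<0$ on the whole interval $(-3,0)$ --- which is what makes $x$ nondecreasing on $[-2,0]$ --- and one must notice that evaluating the integral estimate at $t=-1$ converts it into a closed fixed-point inequality for $x(-1)$; after that, the comparison with the sequence $\{a_i\}$ is routine. One small point to be careful about is that the lemma is implicitly about slowly oscillating periodic solutions of Wright's equation, so that periodicity is available in the first step; for a general, non-periodic $x\in\cX$ the negativity of $x$ on $(-3,0)$ is not automatic, but every application of the lemma in the sequel is to a SOPS.
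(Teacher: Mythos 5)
The paper does not prove this lemma at all --- it is imported from Jones's 1962 paper (Theorem 3.4 there, translated from the quadratic to the exponential form of Wright's equation) --- so there is no in-paper argument to compare against. Your proof is correct and self-contained. The chain of estimates checks out: periodicity transfers the negativity of $x$ on $(q,L)$ to $(-\bar q,0)\supseteq(-3,0)$, which makes $x'\ge 0$ on $(-2,0)$; the variation-of-parameters identity $x(t)=\alpha\int_t^0\bigl(e^{x(s-1)}-1\bigr)\,ds$ together with $x(s-1)\le x(-1)$ gives $x(t)\le -t\,h(x(-1))$ with $h(a)=\alpha(e^a-1)$; evaluating at $t=-1$ yields $x(-1)\le h(x(-1))$, which by convexity of $h$ forces $x(-1)\le a^\ast$, the negative fixed point; and the verification $\alpha-1>\log\alpha$ places $a_1$ (hence, by monotonicity of $h$, every $a_i$) strictly above $a^\ast$, giving the strict inequality $x(t)\le -t\,a^\ast<-t\,a_i$. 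This is arguably slicker than the natural induction on $i$ (base case plus the step $x(t)<-t\,\alpha(e^{a_i}-1)=-t\,a_{i+1}$), since you obtain the limiting bound $a^\ast$ in one stroke and then compare the whole sequence to it. Your closing caveat is also the right one to flag: the statement as written for $x\in\cX$ genuinely needs periodicity (or a restatement on $[L-1,L)$) to control $x$ on $(-3,-1)$, and every invocation in the paper is indeed for a SOPS, so the hypothesis is harmless in context.
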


\begin{lemma}
Suppose that $ x \in \cX $ is a SOPS to Wright's Equation. 
If $ \alpha \geq \pp$ then 
\[
\begin{array}{ccccc}
1+ \frac{1}{\alpha} \left( \frac{\alpha + e^{-\alpha}-1}{\exp\{\alpha + e^{-\alpha}-1\}-1} \right) &<& q  &<& 2 + \frac{1}{\alpha} \\
1 + \frac{1}{\alpha} &<&\bar{q}&<& \max \{3, 2 + | \frac{e^\alpha-1}{e^{a_i(\alpha)}-1}| \}
\end{array}
\]
where $a_i(\alpha)   $ is taken as in Lemma \ref{prop:MinusOne}. 
Additionally, if $ \alpha \geq 2$ then $ q <2$. 
\label{prop:Qbound}
\end{lemma}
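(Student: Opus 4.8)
The plan is to establish each of the four inequalities on $q$ and $\bar q$ (plus the $\alpha\geq 2$ refinement) separately, in each case integrating Wright's equation in the exponential form \eqref{eq:FeedBack} over a sign-definite interval and feeding in the pointwise bounds from Lemmas \ref{prop:GlobalExtrema}, \ref{prop:JonesMax} and \ref{prop:MinusOne}. The key structural fact I would use throughout is that for $x\in\cX$ a SOPS we know the sign pattern precisely: $x>0$ on $(0,q)$, $x<0$ on $(q,L)$, $x(q+1)=\min x<0$, $x(1)=\max x>0$, and moreover $x'(t)=-\alpha f(x(t-1))=-\alpha(e^{x(t-1)}-1)$, so the sign of $x'(t)$ is the opposite of the sign of $x(t-1)$. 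In particular $x$ is increasing on $(0,1)$, decreasing on $(1,q+1)$, increasing on $(q+1,q+2)$ and so on; this is what pins down where the extrema occur and lets us bound the lengths of the sign intervals.

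For the lower bound on $q$: after $t=q$, $x$ becomes negative, and on $(q,q+1)$ we have $x(t-1)>0$ (since $t-1\in(q-1,q)\subset(0,q)$), hence $x'(t)=-\alpha(e^{x(t-1)}-1)<0$, so $x$ is still decreasing — no constraint yet. The binding estimate comes from noting $x$ cannot cross zero too fast: on $(0,q)$ we have $x(t-1)$ ranging over negative-then-positive values, and the decay rate of $x$ just past $q$ is controlled. Concretely I would integrate $x'(t)=-\alpha(e^{x(t-1)}-1)$ from $q$ to $q+1$ using the upper bound $x(t-1)\le p_2(1)=\alpha+e^{-\alpha}-e^{\alpha}\cdot$(evaluate)$\ \dots$ — more precisely $x(s)\le p_2(1)$ for all $s\ge 0$ from Lemma \ref{prop:JonesMax}, but near $s=q$ we need the sharper statement that on $(q-1,q)$, $x$ is small; the quantity $\frac{\alpha+e^{-\alpha}-1}{\exp\{\alpha+e^{-\alpha}-1\}-1}$ suggests using $M:=p_2(1)$-type bound on $\max x$ isn't quite it, rather bounding $x(q+1)$ below via $x(q+1)=\int_q^{q+1}-\alpha(e^{x(s-1)}-1)ds \ge -\alpha(e^{\,\overline{M}}-1)$ where $\overline M$ bounds $x$ on $(q-1,q)$, and then using that to close the loop. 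For the upper bounds $q<2+\frac1\alpha$ and $\bar q<\dots$: these should follow from the fact that once $x(t-1)$ has the right sign, $x'$ is bounded away from zero, so $x$ must return across zero within a controlled time; e.g. on $(1,2)$, if $x$ were still positive at $t=2$ then $x(t-1)>0$ on all of $(1,2)$ forces $x'<0$ there with $|x'(t)|=\alpha(e^{x(t-1)}-1)\ge \alpha(e^{c}-1)$ for suitable lower bound $c$ on $x$ over $(1,2)$... but near $t=q$, $x$ is small, so instead one integrates from $1$ to $q$: $0-\max x = x(q)-x(1)=\int_1^q -\alpha(e^{x(s-1)}-1)\,ds$, and bounds the integrand and $\max x$ to solve for $q-1 < \frac1\alpha$. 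Similarly $\bar q - 1 < \frac1\alpha$ reversed gives the lower bound $\bar q > 1+\frac1\alpha$, and the $\max\{3,\dots\}$ upper bound on $\bar q$ uses Lemma \ref{prop:MinusOne} to bound $|x|$ below on $[-1,0)$ (equivalently on $[L-1,L)$) by $-t\cdot a_i(\alpha)$, forcing $x$ back up to $0$ within the claimed time.

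For the $\alpha\ge 2$ refinement $q<2$: here one wants to show $x(2)<0$, i.e. $x$ has already crossed zero by $t=2$. Since $x$ is decreasing on $(1,q+1)\supseteq(1,2)$ and $x(1)=\max x$, we have $x(2)=x(1)+\int_1^2-\alpha(e^{x(s-1)}-1)\,ds = x(1) - \alpha\int_0^1(e^{x(r)}-1)\,dr$. Now $x(1)=\max x \le p_2(1)=\alpha+e^{-\alpha}-e^{\alpha\cdot 0}\cdot$... wait, $p_2(1)=\alpha\cdot 1+e^{-\alpha}-e^{\alpha(1-1)}=\alpha+e^{-\alpha}-1$, and on $(0,1)$, $x(r)\ge p_1(r)$? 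No — Lemma \ref{prop:JonesMax} gives $x(t)<p_i(t)$ on $(0,1]$, so I need a \emph{lower} bound on $x$ over $(0,1)$ to make $\int_0^1(e^{x(r)}-1)\,dr$ large. A crude lower bound: $x(r)\ge x(0)=0$ is false (x increasing from 0), actually $x(r)>0$ on $(0,1)$ so $e^{x(r)}-1>0$, but I need a quantitative lower bound, perhaps $x(r)\ge$ the solution of the linearized problem or simply $x(r)\ge r\cdot x'(\xi)$... The honest approach: $x'(r)=-\alpha(e^{x(r-1)}-1)$ and on $(0,1)$, $r-1\in(-1,0)$ where $x<0$, so $x'(r)>0$; bound $x(r-1)$ below using Lemma \ref{prop:GlobalExtrema} or \ref{prop:MinusOne} to get $x'(r)\ge$ const, hence $x(r)\ge$ const$\cdot r$, then integrate $e^{x(r)}-1\ge x(r)$. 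Then show $\alpha+e^{-\alpha}-1 - \alpha\cdot(\text{that integral}) < 0$ when $\alpha\ge 2$, which is a one-variable inequality checkable by calculus.

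The main obstacle I anticipate is the lower bound on $q$ (the first inequality): the expression $\frac1\alpha\cdot\frac{\alpha+e^{-\alpha}-1}{\exp\{\alpha+e^{-\alpha}-1\}-1}$ is exactly of the form $\frac{-\min\text{-bound}}{\alpha(e^{\text{max-bound}}-1)}$, which tells me the argument is: $x(q+1)\ge -\alpha(e^{\overline M}-1)\cdot 1$ where $\overline M = \alpha+e^{-\alpha}-1=p_2(1)$ bounds $x$ on $[q-1,q]\subset[0,q]$ (using $x< p_2(1)$ everywhere on $[0,\infty)$ from Lemma \ref{prop:JonesMax}), combined with a lower estimate relating $q-1$ to how far $x$ has descended by $t=q+1$ — specifically on $(q,q+1)$, $x(t-1)\in(q-1,q)$ region where $x\le \overline M$, so $|x'(t)|\le\alpha(e^{\overline M}-1)$, giving $|x(q+1)-x(q)|=|x(q+1)|\le\alpha(e^{\overline M}-1)$; but that's an \emph{upper} bound on $|\min x|$, and we need to convert that into a \emph{lower} bound on $q$ via Walther-type or Jones-type reasoning that $|\min x|$ can't be too small, or rather by running the recursion $x(q+1+1)$ back up through zero. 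I'd need to carefully track which monotonicity interval gives the binding constraint; getting the direction of every inequality right here, especially disentangling where $x$ is bounded above by $p_2(1)$ versus where we need a lower bound, is the delicate bookkeeping that will take the most care. Everything else reduces to integrating an ODE over an interval of length $\le 1$ with known sign data, which is routine once the right pointwise bound is selected.
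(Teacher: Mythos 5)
Your proposal does not follow the paper's route and, more importantly, it has genuine gaps. The paper disposes of everything except the bound $q < 2 + \tfrac{1}{\alpha}$ by citing Theorem 3.5 of \cite{jones1962nonlinear} (this covers the lower bound on $q$, both bounds on $\bar q$, and the claim $q<2$ for $\alpha \geq 2$); the only thing it actually proves is $q < 2+\tfrac1\alpha$, via a three-line argument in the quadratic variable $y$: assume $q\geq 2$, use that $y$ is decreasing on $[1,q+1]$ to get $y(t-1)\geq y(2)$ for $t\in[2,3]$, hence $y'(t)\leq -\alpha y(2)$ while $y\geq 0$, and integrate. Your sketch of this step has the right flavor, but the version you write down (``solve for $q-1<\tfrac1\alpha$'') would prove $q<1+\tfrac1\alpha$, which is false (near the Hopf bifurcation $q\approx 2$ while $1+\tfrac1\alpha\approx 1.64$); the clock must start at $t=2$, not $t=1$, which is exactly why the hypothesis $q\geq 2$ and the constant $y(2)$ enter.

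The most serious gap is the one you flag yourself: the lower bound on $q$. None of the routes you float closes it. Bounding $x(q+1)$ gives information about $|\min x|$, which, as you observe, points the wrong way. The argument that produces exactly the stated constant is: on $[1,q]$ one has $0\leq x(t-1)\leq \max x \leq M \bydef p_2(1)=\alpha+e^{-\alpha}-1$, and by convexity the chord estimate $e^s-1\leq \frac{e^M-1}{M}\,s$ holds for $s\in[0,M]$; hence $|x'(t)|\leq \alpha\frac{e^M-1}{M}\,x(t-1)\leq \alpha\frac{e^M-1}{M}\max x$, and integrating the descent from $x(1)=\max x$ to $x(q)=0$ makes $\max x$ cancel, leaving $q-1\geq \frac1\alpha\cdot\frac{M}{e^M-1}$. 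Without that chord trick you are stuck needing a lower bound on $\max x$ that you do not have. Two further problems: (i) your use of Lemma \ref{prop:MinusOne} for the upper bound on $\bar q$ ignores that that lemma assumes $\bar q\geq 3$ --- the $\max\{3,\cdot\}$ in the statement is precisely the dichotomy ``either $\bar q<3$, or $\bar q\geq 3$ and Lemma \ref{prop:MinusOne} applies,'' and your sketch never makes this split; (ii) for $q<2$ when $\alpha\geq 2$ you need a quantitative \emph{lower} bound for $x$ on $(0,1)$, i.e.\ an upper bound on how close $x$ stays to $0$ on $(-1,0)$, and neither Lemma \ref{prop:GlobalExtrema} (a bound from below) nor Lemma \ref{prop:MinusOne} (which needs $\bar q\geq 3$) supplies it, so that step does not go through as written.
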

\begin{proof}
	All but the upper bound on $q$ follows from Theorem 3.5 in \cite{jones1962nonlinear}. 
	To prove the upper bound, assume that $ q \geq 2$, and consider the quadratic version of Wright's equation given in \eqref{eq:Wright}.  It follows that $ y(t) \geq y(2) $ for all $t \in [1,2]$,  
	and thereby $ y'(t) \geq -\alpha y(2) [1+0]$ for all $t \in [2,3]$. 
	From this we obtain $ q < 2 + \tfrac{1}{\alpha}$.
\end{proof}
 
Step 2 constructs iterative bounds analogous to Lemmas \ref{prop:JonesMax} and \ref{prop:MinusOne}. 
When $\bar{q}(x) \geq 3$,  Step 3  obtains bounds on $I_M$ and  $I_q$ which are tighter than the bounds given in Lemma \ref{prop:GlobalExtrema} and Lemma \ref{prop:Qbound}.
Additionally, the branching procedure further reduces the size of $ I_q$, $I_{\bar{q}}$, and $I_{M}$. 
Below in Algorithm \ref{alg:InitialBoundsleq3} we construct the initial bounds for a region $ K \subseteq \R^3$ containing the $\kappa$-image of SOPS $ x \in \cX$ for which $ \bar{q}(x) \leq 3$.

\begin{algorithm}
	The input we take is an interval $ I_\alpha = [ \alpha_{min} , \alpha_{max}]$ and computational parameters $i_0 , n_{Time} \in \N$. 
	The output is a rectangle $K =I_{q} \times I_{\bar{q}} \times I_{M} \subseteq \R^3$  and bounding functions $u_{K}$, $\ell_{K}$.
	\begin{enumerate}
		
		\item 
		Make the following definitions:
		\begin{align*}
		q_{min} & \bydef 1+  \inf_{\alpha \in I_\alpha} \tfrac{1}{\alpha} \left( \tfrac{\alpha + e^{-\alpha}-1}{\exp\{\alpha + e^{-\alpha}-1\}-1} \right) \\
		M_{min} & \bydef \inf_{\alpha \in I_\alpha } \log \left( 1 + \alpha^{-1} \log  \tfrac{\alpha }{\pi/2}\right) \\
		I_q 			&\bydef 
		\begin{cases}
		\left[ q_{min} ,2  \right] &\mbox{ if } \alpha_{min} \geq 2  \\
		\left[ q_{min} , 2 + \tfrac{1}{\alpha_{min}} \right] & \mbox{otherwise}
		\end{cases}
		\\
		I_{\bar{q} } 		&\bydef
									\left[ 1 + \tfrac{1}{\alpha_{max}} , 3\right] \\
		I_M 				&\bydef 
									 \left[ M_{min},p_{i_0}(1) \right].
		\end{align*}
		
		\item For $p_i$ given as in Propositions  \ref{prop:JonesMax}, define bounding functions 
		\begin{align*}
		\ell_{K}(t) \bydef&
		\begin{cases}
		0						& \mbox{ if } t = 0  \\
		- \alpha_{max} (e^{\alpha_{max}} - 1) & \mbox{ otherwise} 
		\end{cases}
		&
		u_{K}(t) \bydef&
		\begin{cases}
		0& \mbox{ if } t = 0  \\
		p_{i_0}(1) & \mbox{ otherwise.} 
		\end{cases}
		\end{align*}
		These bounding functions are stored on the computer with time resolution $n_{Time}$ as described in Appendix \ref{sec:Appendix}. 
		
	\end{enumerate}
	
	\label{alg:InitialBoundsleq3}
\end{algorithm}

\begin{proposition}
		\label{prop:InitialShort}
	Fix an interval $ I_\alpha = [ \alpha_{min} , \alpha_{max}]$ such that $\alpha_{min} \geq \pp$. Let $K , u_{K}$, $\ell_{K}$ denote the output of Algorithm \ref{alg:InitialBoundsleq3}. 
	If $ x \in \cX$ is a SOPS to Wright's equation at parameter $ \alpha \in I_\alpha$ and $ \bar{q} (x) \leq 3$ then $ \kappa ( x) \in K$ and $ \ell_K \leq x \leq u_K$. 
\end{proposition}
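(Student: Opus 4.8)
The plan is to verify, piece by piece, that each of the three components of $\kappa(x) = \{q(x), \bar q(x), x(1)\}$ lands in the corresponding interval produced by Algorithm \ref{alg:InitialBoundsleq3}, and then separately that the piecewise-constant functions $\ell_K, u_K$ really do bound $x$. The hypotheses available are: $\alpha \in I_\alpha$ with $\alpha_{min} \ge \pp$, $x \in \cX$ is a SOPS, and $\bar q(x) \le 3$. The only substantive inputs are Lemmas \ref{prop:GlobalExtrema}, \ref{prop:JonesMax}, \ref{prop:MinusOne}, \ref{prop:Qbound}, together with the Walther bound \eqref{eq:Walther} (already cited inside the proof of Proposition \ref{prop:Prune}).

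First I would handle $I_q$. Lemma \ref{prop:Qbound} gives $q(x) > 1 + \tfrac1\alpha\left(\tfrac{\alpha + e^{-\alpha}-1}{\exp\{\alpha+e^{-\alpha}-1\}-1}\right)$; taking the infimum over $\alpha \in I_\alpha$ of the right-hand side is exactly $q_{min}$, so $q(x) > q_{min}$. For the upper bound, Lemma \ref{prop:Qbound} gives $q(x) < 2 + \tfrac1\alpha \le 2 + \tfrac1{\alpha_{min}}$ in general, and $q(x) < 2$ whenever $\alpha \ge 2$; since $\alpha \ge \alpha_{min}$, in the case $\alpha_{min} \ge 2$ we get $q(x) < 2$. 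This matches the case split defining $I_q$, so $q(x) \in I_q$. Next, $I_{\bar q}$: Lemma \ref{prop:Qbound} gives $\bar q(x) > 1 + \tfrac1\alpha \ge 1 + \tfrac1{\alpha_{max}}$, which is the left endpoint; and the standing hypothesis $\bar q(x) \le 3$ is exactly the right endpoint, so $\bar q(x) \in I_{\bar q}$. For $I_M$: the third coordinate of $\kappa(x)$ is $x(1) = \max_{t} x(t)$. The upper bound $x(1) < p_{i_0}(1)$ is the ``furthermore'' clause of Lemma \ref{prop:JonesMax} (with $i = i_0$). The lower bound $x(1) \ge M_{min}$ requires a short argument: by Walther's bound \eqref{eq:Walther}, $\min x \le -\log\tfrac{\alpha}{\pi/2}$; passing back through $y = e^x - 1$ and using that the amplitude relation / the structure of the SOPS forces $\max x$ (equivalently $\max y = e^{x(1)}-1$) to be at least as large as the quantity $\alpha^{-1}\log\tfrac{\alpha}{\pi/2}$, one gets $x(1) \ge \log(1 + \alpha^{-1}\log\tfrac{\alpha}{\pi/2})$, whose infimum over $I_\alpha$ is $M_{min}$. (This is the same kind of reasoning used in \cite{neumaier2014global,jones1962nonlinear}; I would cite it rather than redo it.) Hence $x(1) \in I_M$ and $\kappa(x) \in K = I_q \times I_{\bar q} \times I_M$.

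It remains to check $\ell_K(t) \le x(t) \le u_K(t)$ for all $t \in \R$. At $t = 0$ we have $x(0) = 0$ by definition of $\cX$, matching $\ell_K(0) = u_K(0) = 0$. For $t \ne 0$: the upper bound $x(t) \le p_{i_0}(1)$ is again the ``furthermore $x(t) < p_i(1)$ for all $t \ge 0$'' clause of Lemma \ref{prop:JonesMax} for $t \ge 0$, and for $t < 0$ it follows from periodicity of the SOPS together with the fact that the SOPS attains its global maximum $x(1) < p_{i_0}(1)$ — so $x(t) \le \max x = x(1) < p_{i_0}(1)$ for every $t$. The lower bound $x(t) \ge -\alpha_{max}(e^{\alpha_{max}}-1)$ for $t \ne 0$ comes from Lemma \ref{prop:GlobalExtrema}, which gives $x(t) \ge -\alpha(e^\alpha - 1)$ for $t > 0$; since $s \mapsto s(e^s-1)$ is increasing in $s > 0$ and $\alpha \le \alpha_{max}$, this is $\ge -\alpha_{max}(e^{\alpha_{max}}-1)$, and again periodicity extends the bound from $t > 0$ to all $t \ne 0$. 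This completes the verification.

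The main obstacle — really the only non-mechanical point — is the lower bound $x(1) \ge M_{min}$, i.e.\ extracting a lower bound on the maximum of a SOPS from Walther's upper bound on its minimum; everything else is a direct substitution of $\alpha_{min}$ or $\alpha_{max}$ into monotone expressions coming from the cited lemmas, plus the routine observation that for a periodic function a bound valid on $t > 0$ (or on the designated sign intervals) automatically holds for all $t$. I would therefore spend the bulk of the written proof on the $M_{min}$ estimate and dispatch the rest by citation to Lemmas \ref{prop:GlobalExtrema}, \ref{prop:JonesMax}, and \ref{prop:Qbound}.
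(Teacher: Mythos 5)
Your proposal is correct and follows essentially the same route as the paper: Lemma \ref{prop:Qbound} for $I_q$ and $I_{\bar q}$, Lemma \ref{prop:JonesMax} together with Walther's bound for $I_M$, and Lemmas \ref{prop:GlobalExtrema} and \ref{prop:JonesMax} for the bounding functions. The one step you defer to citation --- the amplitude relation $|\min x| \le \alpha\left(e^{\max x}-1\right)$ underlying the $M_{min}$ estimate --- is exactly what the paper obtains in one line by integrating $x'(t) = -\alpha\left(e^{x(t-1)}-1\right)$ over $[q,q+1]$, using that $x>0$ on $[q-1,q]$, so you should derive it rather than cite it.
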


\begin{proof}
	We treat the two steps in order. 
	\begin{enumerate}
		\item
		If $ x \in \cX$ is a SOPS to Wright's equation and $ \bar{q} (x) \leq 3$, then by Lemma \ref{prop:Qbound} it follows that $ q(x) \in I_q$ and $ \bar{q}(x) \in I_{\bar{q}}$. 
		 By Proposition  \ref{prop:JonesMax} it follows that  $ x(1) \leq p_{i_0}(1)$.  
		If $x$ is a SOPS to Wright's equation with $ \alpha \geq \pi/2$ then $ \min x \leq - \log   \tfrac{2 \alpha}{\pi } $ \cite{walther1978theorem}.  
		If  $\max x <  \log \left( 1 + \alpha^{-1} \log  \tfrac{  \alpha }{\pi/2} \right)   $  
		then by integrating Wright's equation forward from $t=q$ to $t=q+1$  it follows that $ x(q+1) = \min x \leq - \log   \tfrac{  \alpha}{\pi /2} $, a contradiction.
		Hence we may assume that $x(1)  \geq   \log \left( 1 + \alpha^{-1} \log  \tfrac{ \alpha }{\pi/2} \right) \geq M_{min} $.

		\item
		Since $x \in \cX$ then $ x(0) = 0$, and by Lemma \ref{prop:GlobalExtrema} and Proposition  \ref{prop:JonesMax} it follows that  $ - \alpha ( e^\alpha -1) \leq x \leq p_{i_0}(1)$ for  any SOPS $ x \in \cX$. 
		Hence $\ell_K$ and $u_K$ are bounding functions for $ K = I_{q} \times I_{\bar{q}} \times I_{M}$.  \qedhere
	\end{enumerate}
\end{proof}

To construct the initial bounds for the case $ \bar{q}(x) \geq 3$,  we make greater use of \emph{a priori} bounds.  
Unfortunately the bounds on $I_{\bar{q}}$ given in Lemma \ref{prop:Qbound} are not sharp, that is the width of this estimate of $I_{\bar{q}}$ is greater than $ e^\alpha -2$. Using this estimate would be computational difficult.  
In \cite{nussbaum1982asymptotic} Nussbaum estimates the value of $ \bar{q}$ up to $\mathcal{O}(\tfrac{1}{\alpha})$ in the case of $ \bar{q}(x) \geq 3$ and $ \alpha \geq 3.8$.  
We derive a similar estimate which only assumes $\bar{q}(x) \geq 2$ and $\alpha > 0$. 
This estimate is better suited for numerical applications, and only needs bounds $ \ell(t) \leq x(t) \leq u(t)$ that are defined over the time domain $ t \in [-1,4]$.

	\begin{lemma}
		Fix some $ \alpha >0$ and suppose that $x \in \cX$ is a SOPS to Wright's equation, and 
		let $\ell,u:\R \to \R$ be functions for which $\ell(t) \leq x(t) \leq u(t)$.  
		Let $ I_q \subset \R$ be an interval for which $ q(x) \in I_q$ and suppose that $ \bar{q}(x) \geq 2$. 
		Define the following integral bounds: 
		\begin{align}
				U^{+} &\bydef  		\sup_{q \in I_q } 	\int_{q-1}^q 	\max\left \{ e^{u(t)} -1,0\right\} dt   
			&	U^{-}_{1} &\bydef  	\sup_{q \in I_q }   \int_{q}^{q+1} 	- \min\left \{ e^{\ell(t)} -1 ,0\right\} dt  \label{eq:Udef}\\
				L^{+} &\bydef  		\inf_{q \in I_q }  	\int_{q-1}^q 	\max\left \{ e^{\ell(t)} -1,0\right\} dt  
			&	L^{-}_{1} &\bydef  	\inf_{q \in I_q }   \int_{q}^{q+1} 	- \min\left \{ e^{u(t)} -1 ,0\right\} dt  \label{eq:Ldef}
		\end{align}
		 and define 
		\(
		m \bydef \min_{   t \in I_q } \ell(t+1)
		\).  
		Then $\bar{q}$ is bounded by the inequalities 
	\begin{equation}
2 + \frac{L^{+} - U^{-}_{1}}{|e^{m} -1|} \leq \bar{q} \leq 2 + \frac{U^{+} - L^{-}_{1}}{|e^{u(-1)} -1|} .
\label{eq:PeriodBound}
	\end{equation}
		\label{prop:PeriodBound}
		\end{lemma}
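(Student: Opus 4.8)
The plan is to extract an integral identity from the zero structure of a SOPS and then bracket each term in it using the pointwise sandwich $\ell\le x\le u$. Throughout write $q=q(x)$, $\bar q=\bar q(x)$, $L=q+\bar q$, and recall that for $x\in\cX$ a SOPS one has $z_0(x)=0$, $z_1(x)=q$, $z_2(x)=L$, with $x>0$ on $(0,q)$, $x<0$ on $(q,L)$, and $x(t+L)=x(t)$. First I would record the qualitative behaviour of $x$ on the negative interval $[q,L]$: writing \eqref{eq:FeedBack} as $x'(t)=-\alpha(e^{x(t-1)}-1)$, note that for $t\in[q,q+1]$ the argument $t-1$ lies in $[q-1,q]\subset[0,q]$ (using $q>1$), where $x\ge 0$, so $x'\le 0$; and for $t\in[q+1,L]$ the argument $t-1$ lies in $[q,L-1]\subset[q,L]$, where $x\le 0$, so $x'\ge 0$. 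Hence $x$ decreases then increases on $[q,L]$, so $\min_{\R}x=x(q+1)$; moreover $x(L-1)=x(-1)$ by $L$-periodicity, and $L-1\in(q,L)$ since $\bar q\ge 2>1$, so $x(L-1)<0$.

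Next I would establish the key identity. Since $x(q)=x(L)=0$, integrating \eqref{eq:FeedBack} over $[q,L]$ and substituting $s=t-1$ gives $0=-\alpha\int_{q-1}^{L-1}(e^{x(s)}-1)\,ds$, and, since $\alpha>0$, splitting this integral at $s=q$ yields
\[
\int_{q-1}^{q}(e^{x(s)}-1)\,ds \;=\; \int_{q}^{L-1}\bigl(1-e^{x(s)}\bigr)\,ds .
\]
On $[q-1,q]\subset[0,q]$ we have $0\le x\le u$ and $\ell\le x$, hence $\max\{e^{\ell}-1,0\}\le e^{x}-1\le\max\{e^{u}-1,0\}$; integrating and using $q\in I_q$ gives $L^{+}\le\int_{q-1}^{q}(e^{x}-1)\,ds\le U^{+}$.

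For the right-hand side I would split at $q+1$, which is legitimate since $\bar q\ge 2$ forces $q+1\le L-1$. On $[q,q+1]$ we have $x\le 0$, and a sign check turns $\ell\le x\le u$ into $-\min\{e^{u}-1,0\}\le 1-e^{x}\le-\min\{e^{\ell}-1,0\}$, so $L^{-}_1\le\int_{q}^{q+1}(1-e^{x})\,ds\le U^{-}_1$. On $[q+1,L-1]$, of length $\bar q-2$, monotonicity of $x$ makes $s\mapsto 1-e^{x(s)}$ nonnegative and decreasing, so
\[
(\bar q-2)\bigl(1-e^{x(L-1)}\bigr)\;\le\;\int_{q+1}^{L-1}\bigl(1-e^{x(s)}\bigr)\,ds\;\le\;(\bar q-2)\bigl(1-e^{\min_{\R}x}\bigr).
\]
Since $m\le\ell(q+1)\le x(q+1)=\min_{\R}x$ we get $1-e^{\min_{\R}x}\le|e^{m}-1|$, and since $x(L-1)=x(-1)\le u(-1)$ (which is $\le 0$ for any bounding function respecting the sign condition defining $\cX$) we get $1-e^{x(L-1)}\ge|e^{u(-1)}-1|$. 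Combining all the pieces through the identity gives $L^{+}\le U^{-}_1+(\bar q-2)|e^{m}-1|$ and $U^{+}\ge L^{-}_1+(\bar q-2)|e^{u(-1)}-1|$, and rearranging yields \eqref{eq:PeriodBound}.

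I expect the main obstacle to be bookkeeping rather than a deep idea: one must be scrupulous about the signs of $x$, $\ell$, $u$ when passing from $\ell\le x\le u$ to the truncated integrands $\max\{\cdot,0\}$ and $-\min\{\cdot,0\}$ defining $U^{\pm}$, $L^{\pm}_1$, and about the degenerate case $\bar q=2$ where the middle interval collapses (both middle estimates are then $0$ and everything stays consistent). The one genuinely structural observation is that the value $x(L-1)$ controlling the lower estimate of the middle integral equals $x(-1)$ by periodicity and is therefore governed by $u(-1)$; this is exactly what produces the $|e^{u(-1)}-1|$ in the denominator of the upper bound.
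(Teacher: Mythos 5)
Your argument is correct and follows essentially the same route as the paper: the identity $\int_{q-1}^{q}|e^{x}-1|\,dt=\int_{q}^{L-1}|e^{x}-1|\,dt$ obtained by integrating Wright's equation between consecutive zeros, the split of the right-hand side at $q+1$, and the control of the tail $[q+1,L-1]$ of length $\bar q-2$ via monotonicity, $m\le x(q+1)$ and $x(L-1)=x(-1)\le u(-1)$. Your explicit derivation of the sign of $x'$ on $[q,q+1]$ and $[q+1,L]$, and your remark that the upper bound implicitly needs $u(-1)\le 0$, merely make precise what the paper asserts without comment.
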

		 The proof is  delayed until the end of this section.
		 The computational details of how we evaluate the integrals in  \eqref{eq:Udef} and \eqref{eq:Ldef} are discussed in Appendix \ref{sec:Appendix}. 
		 Below in Algorithm \ref{alg:InitialBoundsgeq3} we construct the initial bounds for a region $ K \subseteq \R^3$ containing the $\kappa$-image of SOPS $ x \in \cX$ for which $ \bar{q}(x) \geq 3$.

\begin{algorithm}
The input  is an interval $ I_\alpha= [ \alpha_{min} , \alpha_{max}]$ and computational parameters $i_0, j_0, \\ n_{Time}, \ N_{period} \in \N$. 
The output is a rectangle $K =I_{q} \times I_{\bar{q}} \times I_{M}$  and bounding functions $u_{K}$, $\ell_{K}$.
\begin{enumerate}
	
	\item 
	Make the following definitions for $K =I_{q} \times I_{\bar{q}} \times I_{M}$:
	\begin{align*}
		q_{min} & \bydef 1+  \inf_{\alpha \in I_\alpha} \tfrac{1}{\alpha} \left( \tfrac{\alpha + e^{-\alpha}-1}{\exp\{\alpha + e^{-\alpha}-1\}-1} \right) \\ 
		I_q 			&\bydef
		\left[q_{min} ,
		2 + \tfrac{1}{\alpha_{min}} 
		\right] \\
	I_{\bar{q} } 	&\bydef 
	\left[ 3 , \sup_{\alpha \in I_\alpha} 2 + \left| \frac{e^\alpha-1}{e^{a_{j_{0}}(\alpha)}-1} \right| \right] \\
	I_M 				&\bydef \left[0, p_{i_0}(1) \right]
	\end{align*}
	where $a_i(\alpha)  $ is taken as in Lemma \ref{prop:MinusOne}. 
	
	\item For $p_i$ and $a_j$ given as in Propositions  \ref{prop:JonesMax} and \ref{prop:MinusOne} respectively,  define bounding functions $ \ell_{K}$ and $ u_{K} $ 
		\begin{align*}
			\ell_{K}(t) \bydef&
			\begin{cases}
			0& \mbox{ if } t = 0  \\
			p_{i_{0}}(t)  & \mbox{ if } t < 0   \\
			\inf_{\alpha \in I_\alpha}- \alpha (e^\alpha - 1) & \mbox{ otherwise} 
			\end{cases}
			&
			u_{K}(t) \bydef&
			\begin{cases}
			0& \mbox{ if } t = 0  \\
			 \sup_{\alpha \in I_\alpha }  -t \cdot a_{j_{0}}(\alpha)     & \mbox{ if } t \in [-1,0)  \\
			p_{i_0}(1) & \mbox{ otherwise.} 
			\end{cases}
		\end{align*}
		These bounding functions are stored on the computer with time resolution $n_{Time}$ as described in Appendix \ref{sec:Appendix}.

	\item 
	Refine $u_K$ and $\ell_K$ according to Step 1 of Algorithm \ref{alg:Prune}. 
	 For  $N_{period}$  iterations, refine $u_K$ and $\ell_K$ according to Step 2 of Algorithm \ref{alg:Prune} for values $ t_0 \in [-4,4]$. Then define $I_q$ and $I_M$ according to Step 3 of Algorithm \ref{alg:Prune}.

	\item 
	For values of $m,L^{+},L^{-}_{1},U^{+},U^{-}_{1}$  given as in Proposition \ref{prop:PeriodBound}, define:
\begin{align*}
\bar{q}_{min} \bydef& 2 + \frac{L^{+} - U^{-}_{1}}{|e^{m} -1|} ,
&
\bar{q}_{max} \bydef &  
2 + \frac{U^{+} - L^{-}_{1}}{|e^{u_K(-1)} -1|}.
\end{align*}
	If $\bar{q}_{max} < 3$ then define $K = \emptyset $. 
	Otherwise define $ I_{\bar{q}} = [ \bar{q}_{min} ,\bar{q}_{max} ]$ and $ K = I_q \times I_{\bar{q}} \times I_M$.

\end{enumerate}

\label{alg:InitialBoundsgeq3}

\end{algorithm}

\begin{remark}
	In practice we select $i_0 =2$ and $ j_0 =20$ in Step 2, which have proved sufficient for our purposes. 
	In \cite{jones1962nonlinear} the expressions for $ p_i$  are given in closed form for $ i = 1,2,3,4$, each function being increasingly complex. 
	The sequence $ a_j(\alpha)$ is convergent, and we use $j_{0}=20$ because we have found negligible improvements when   using a larger index. 
\end{remark}

\begin{proposition}
	\label{prop:InitialLong}
	Fix an interval $ I_\alpha = [ \alpha_{min} , \alpha_{max}]$ such that $ \alpha_{min} \geq \pp$, and fix computational parameters $i_0, j_0,n_{Time} , N_{period}  \in \N$. Let $\{K , u_{K}$, $\ell_{K}\}$ denote the output of Algorithm \ref{alg:InitialBoundsgeq3}. 
	If $ x \in \cX$ is a SOPS to Wright's equation and $ \bar{q} (x) \geq 3$ then $ \kappa ( x) \in K$ and $ \ell_K \leq x \leq u_K$. 
\end{proposition}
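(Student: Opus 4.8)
The plan is to follow the same step-by-step template used in the proof of Proposition~\ref{prop:InitialShort}, validating each step of Algorithm~\ref{alg:InitialBoundsgeq3} in turn against the \emph{a priori} lemmas. Let $x \in \cX$ be a SOPS to Wright's equation at parameter $\alpha \in I_\alpha$ with $\bar q(x) \geq 3$. For Step~1, the lower bound $q_{min}$ on $q(x)$ and the upper bound $2 + \tfrac{1}{\alpha_{min}}$ come directly from Lemma~\ref{prop:Qbound} (using $\alpha \geq \pp$). Since $\bar q(x) \geq 3$, the same lemma gives $\bar q(x) \geq 3 > 1 + \tfrac{1}{\alpha}$, and also $\bar q(x) < \max\{3, 2 + |\tfrac{e^\alpha - 1}{e^{a_i(\alpha)} - 1}|\}$; because $\bar q(x) \geq 3$ we may discard the $3$ in the max and conclude $\bar q(x) < 2 + |\tfrac{e^\alpha - 1}{e^{a_{j_0}(\alpha)} - 1}|$, which is bounded by the supremum over $\alpha \in I_\alpha$ defining $\bar q_{max}$ in Step~1. (Here one should note that $a_{j_0}(\alpha)$ gives a valid bound: $|x(t)| < |t \cdot a_{j_0}(\alpha)|$ on $[-1,0)$ per Lemma~\ref{prop:MinusOne}, and the monotone convergence of the sequence $a_j$ lets us use the fixed index $j_0$.) Finally $x(1) = \max x \in [0, p_{i_0}(1)]$: the upper bound is Lemma~\ref{prop:JonesMax} and the lower bound $0$ holds because $x$ is a SOPS so $\max x > 0$.

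For Step~2, I would verify that $\ell_K$ and $u_K$ are bounding functions for $K$. On $t < 0$ Lemma~\ref{prop:JonesMax} gives $x(t) > p_{i_0}(t) = \ell_K(t)$; on $t \in [-1,0)$ Lemma~\ref{prop:MinusOne} (applicable since $\bar q(x) \geq 3$, $\alpha \geq \pp$) gives $x(t) < -t \cdot a_{j_0}(\alpha) \leq u_K(t)$; for $t \geq 0$, $x(t) < p_{i_0}(1) = u_K(t)$ by Lemma~\ref{prop:JonesMax} and $x(t) \geq -\alpha(e^\alpha - 1) \geq \ell_K(t)$ by Lemma~\ref{prop:GlobalExtrema}; at $t = 0$, $x(0) = 0$. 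Hence $\ell_K \leq x \leq u_K$ on all of $\R$, so the output of Step~2 is a valid pair of bounding functions associated with the region $K$ from Step~1, and $\kappa(x) \in K$.

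Steps~3 and~4 only tighten the bounds, so their validity is inherited: Step~3 applies Steps~1--3 of Algorithm~\ref{alg:Prune}, each of which preserves the bounding-function property and the containment $\kappa(x) \in K$ by Proposition~\ref{prop:Prune}. Step~4 applies Lemma~\ref{prop:PeriodBound} with $\ell = \ell_K$, $u = u_K$ (the refined functions, which are valid bounds by the previous steps) and $I_q$ the refined interval from Step~3 (which contains $q(x)$), and with the hypothesis $\bar q(x) \geq 2$ satisfied since in fact $\bar q(x) \geq 3$; the conclusion of that lemma is exactly $\bar q_{min} \leq \bar q(x) \leq \bar q_{max}$, so intersecting $I_{\bar q}$ with $[\bar q_{min}, \bar q_{max}]$ preserves $\bar q(x) \in I_{\bar q}$. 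In the case $\bar q_{max} < 3$ this would contradict our standing assumption $\bar q(x) \geq 3$, so there is no such SOPS and returning $K = \emptyset$ is vacuously correct. Collecting the four steps, $\kappa(x) \in K$ and $\ell_K \leq x \leq u_K$ for the final output, which is the claim.

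The main obstacle is Step~1's upper bound on $\bar q(x)$: one must check carefully that the index-$j_0$ truncation $a_{j_0}(\alpha)$ of the recursively defined sequence in Lemma~\ref{prop:MinusOne} still yields a rigorous upper bound on $\bar q(x)$ (as opposed to only the limit doing so), and that taking the supremum over the interval $I_\alpha$ is legitimate — this needs the monotonicity of $|p_i|$ and the behavior of $a_j(\alpha)$ in $\alpha$. The rest is a routine matching of each algorithmic definition to the corresponding cited lemma, together with the observation that $\bar q(x) \geq 3$ lets us simplify the $\max$ appearing in Lemma~\ref{prop:Qbound}.
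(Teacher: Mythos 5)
Your proposal is correct and follows essentially the same step-by-step argument as the paper's proof: validating Steps 1 and 2 of Algorithm \ref{alg:InitialBoundsgeq3} against Lemmas \ref{prop:GlobalExtrema}--\ref{prop:Qbound}, invoking Proposition \ref{prop:Prune} for Step 3, and applying Lemma \ref{prop:PeriodBound} (with the contradiction in the case $\bar{q}_{max}<3$) for Step 4. Your extra care about the fixed index $j_0$ is unnecessary but harmless, since Lemma \ref{prop:MinusOne} already asserts the bound for every $i\in\N$, not only in the limit.
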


\begin{proof}
	Let $x$ be as described above. We describe the effect of each step of the algorithm in turn. 
	
	 \begin{enumerate}
		\item  
		For $ I_q, I_{\bar{q}}$ and $ I_M$ defined in Step 1, it follows from Lemma  \ref{prop:Qbound} that $ q(x) \in I_q$ and $ \bar{q}(x) \in I_{\bar{q}}$, and it follows from Lemma \ref{prop:GlobalExtrema} and Lemma \ref{prop:JonesMax} that $ x(1) \in I_M$. 

		\item 
		Since $ x \in \cX$ then $x (0) = 0$.  
		By Lemma  \ref{prop:GlobalExtrema}  then any SOPS $ x \in \cX$ satisfies the inequality $ - \alpha ( e^\alpha -1) \leq x(t) \leq p_{i_0}(1)$.  
		The definition of the $ \ell_K$ bound for $ t<0$ follows from Lemma \ref{prop:JonesMax}, and the definition of the $u_K$  bound  for $ t \in [-1,0)$  follows from  Lemma \ref{prop:MinusOne}.
		
		\item 
		The results of Steps 1 and 2 produce a region $K$ with bounding functions $u_K, \ell_K$  for which $ \kappa(x) \in K$ whenever there is a SOPS $x \in \cX$ satisfying $ \bar{q}(x) \geq 3$. 
		By Proposition \ref{prop:Prune}, implementing Steps 2 and 3 of Algorithm \ref{alg:Prune} preserves this property. 
		
		\item 
		Since $ \bar{q}(x) \geq 3 > 2$ then by Lemma \ref{prop:PeriodBound} it follows that $ \bar{q}_{min} \leq \bar{q}(x) \leq \bar{q}_{max}$.  
		If $ \bar{q}_{max} < 3$, this contradicts our initial assumption that $ \bar{q}(x) \geq 3$, whereby there are no SOPS $x \in \cX$ to Wright's equation at any parameter $\alpha\in I_\alpha$ for which $\bar{q}(x) \geq 3$.   
		Otherwise for our definition of $ K  = I_q \times I_{\bar{q}} \times I_M$ it follows that $ \kappa(x) \in K$ whenever $ \bar{q}(x) \geq 3$.  \qedhere

	 \end{enumerate}
\end{proof}

 We present an application of this theorem. 
 \begin{proposition}
 	If $ x \in \cX$ is a SOPS to~\eqref{eq:Wright} and $ \alpha \in [\pp , 2.07]$ then $ \bar{q}(x) < 3$. 
 \end{proposition}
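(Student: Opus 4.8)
The plan is to apply Algorithm \ref{alg:InitialBoundsgeq3} with the parameter interval $I_\alpha = [\pp, 2.07]$ and to show that its output is the empty set, i.e.\ that the quantity $\bar q_{max}$ computed in Step 4 satisfies $\bar q_{max} < 3$. By Proposition \ref{prop:InitialLong}, if there were a SOPS $x \in \cX$ at some $\alpha \in [\pp, 2.07]$ with $\bar q(x) \geq 3$, then $\kappa(x)$ would lie in the output region $K$; so if $K = \emptyset$, no such SOPS exists, which is exactly the claim.

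The key steps, in order, are as follows. First, instantiate Step 1 of Algorithm \ref{alg:InitialBoundsgeq3}: compute $q_{min}$, take $I_q = [q_{min}, 2 + \tfrac{1}{\pi/2}]$, take $I_{\bar q} = [3, \sup_{\alpha \in I_\alpha} 2 + |(e^\alpha-1)/(e^{a_{j_0}(\alpha)}-1)|]$ (using that $|a_i(\alpha)|$ and the related quantities are monotone in $\alpha$, so the supremum is attained at $\alpha = 2.07$), and $I_M = [0, p_{i_0}(1)]$ with $i_0 = 2$, $j_0 = 20$ as in the Remark. Second, set up the bounding functions $\ell_K, u_K$ from Step 2 using Lemmas \ref{prop:GlobalExtrema}, \ref{prop:JonesMax}, \ref{prop:MinusOne}. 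Third, carry out the pruning in Step 3 (Steps 1--3 of Algorithm \ref{alg:Prune}, plus $N_{period}$ rounds of the variation-of-parameters Step 2 on $t_0 \in [-4,4]$), which is valid by Proposition \ref{prop:Prune}. Fourth, compute the integral bounds $U^+, U^-_1, L^+, L^-_1$ and $m$ of Lemma \ref{prop:PeriodBound} from the resulting $\ell_K, u_K$, and then $\bar q_{max} = 2 + (U^+ - L^-_1)/|e^{u_K(-1)} - 1|$. The content of the proof is the numerical verification that this $\bar q_{max}$ comes out strictly below $3$; this is a finite rigorous computation (with interval arithmetic) of exactly the sort the algorithms were built to perform.

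The main obstacle is Step 4: one must show that after only the modest amount of pruning prescribed in Step 3, the refined bounding functions are already tight enough on $[-1,4]$ that the upper estimate for $\bar q$ in \eqref{eq:PeriodBound} dips below $3$ uniformly over $\alpha \in [\pp, 2.07]$. This is delicate precisely near $\alpha = \pi/2$, where the SOPS amplitude is small (it is born at the Hopf bifurcation), so $|e^{u_K(-1)}-1|$ in the denominator is small and the numerator $U^+ - L^-_1$ must be controlled correspondingly sharply; the cutoff $2.07$ is presumably the largest value for which the a priori bounds plus the fixed pruning budget still close the argument. In writing the proof I would simply report the verified numerical outcome of running Algorithm \ref{alg:InitialBoundsgeq3} on $I_\alpha = [\pp, 2.07]$ (namely $K = \emptyset$), invoke Proposition \ref{prop:InitialLong} to conclude, and defer the arithmetic details to Appendix \ref{sec:Appendix}.
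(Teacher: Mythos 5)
Your proposal follows essentially the same route as the paper: run Algorithm \ref{alg:InitialBoundsgeq3}, verify that it returns $K=\emptyset$, and conclude via Proposition \ref{prop:InitialLong}. The one practical difference is that the paper does not run the algorithm on the single interval $[\pp,2.07]$ but subdivides $[1.57,2.07]$ into subintervals of width $0.1$ (with $i_0=2$, $j_0=20$, $n_{Time}=128$, $N_{period}=10$) and verifies $K=\emptyset$ on each piece; this subdivision matters because interval-arithmetic enclosures taken uniformly over the full half-unit-wide parameter range would likely be too loose for the Step 4 test $\bar q_{max}<3$ to succeed, so you should state the computation as a covering by small subintervals rather than a single run.
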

\begin{proof}
	First we constructed subintervals $I_{\alpha}$ of $ [ 1.57, 2.07]$ of width $ 0.1$, and for each subinterval $I_\alpha$  we ran Algorithm \ref{alg:InitialBoundsgeq3} with computational parameters $i_{0} =2$, $j_{0} =20$, $ n_{time} = 128$, and $ N_{period} = 10$ (see \cite{webpage_jones_conjecture} for associated MATLAB code). 
	In each case the algorithm returned $K = \emptyset$. 
\end{proof}

\begin{corollary}
	Fix an interval $I_\alpha = [ \alpha_{min} , \alpha_{max}]$ such that $ \alpha_{min}  > \pp$, and fix computational parameters $i_0, j_0, N_{period} \in \N$.  
	Let $ \{K_1 , u_{K_1}$, $\ell_{K_1}\} $ denote the output of Algorithm \ref{alg:InitialBoundsleq3} and let $\{K_2 , u_{K_2}$, $\ell_{K_2}\}$ the output of Algorithm \ref{alg:InitialBoundsgeq3}. 
	Then $  K_1 \cup K_2 $ is $I_\alpha$-exhaustive. 
	\label{prop:ConstructInitialBounds}
\end{corollary}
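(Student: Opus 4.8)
The plan is to show that every SOPS $x \in \cX$ to Wright's equation at a parameter $\alpha \in I_\alpha$ has its $\kappa$-image in $K_1 \cup K_2$, which is precisely the definition of $I_\alpha$-exhaustiveness. The natural dichotomy to exploit is whether $\bar{q}(x) \leq 3$ or $\bar{q}(x) \geq 3$, since the two algorithms were designed to handle exactly these two cases. First I would fix an arbitrary SOPS $x \in \cX$ at parameter $\alpha \in I_\alpha$ (such solutions exist by Jones \cite{jones1962existence}, though for the logical structure of the corollary this is immaterial — if no SOPS exists the statement is vacuous). Then I would split into the two cases.

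In the case $\bar{q}(x) \leq 3$, I would invoke Proposition \ref{prop:InitialShort}, whose hypotheses are exactly $\alpha_{min} \geq \pp$ (which holds since $\alpha_{min} > \pp$) and that $x \in \cX$ is a SOPS with $\bar{q}(x) \leq 3$; its conclusion gives $\kappa(x) \in K_1$, hence $\kappa(x) \in K_1 \cup K_2$. In the case $\bar{q}(x) \geq 3$, I would symmetrically invoke Proposition \ref{prop:InitialLong}, whose hypotheses ($\alpha_{min} \geq \pp$, $x \in \cX$ a SOPS with $\bar{q}(x) \geq 3$) are again met, yielding $\kappa(x) \in K_2 \subseteq K_1 \cup K_2$. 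Since the two cases $\bar{q}(x) \leq 3$ and $\bar{q}(x) \geq 3$ are exhaustive (they overlap at $\bar{q}(x) = 3$, which is harmless), we conclude $\kappa(x) \in K_1 \cup K_2$ for every SOPS at parameter $\alpha \in I_\alpha$. By Definition \ref{def:IalphaEx}, $K_1 \cup K_2$ is $I_\alpha$-exhaustive.

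There is essentially no obstacle here: the corollary is a straightforward case-split gluing together the two preceding propositions, and the only thing to check carefully is that the hypotheses of Propositions \ref{prop:InitialShort} and \ref{prop:InitialLong} are subsumed by those of the corollary — in particular that $\alpha_{min} > \pp$ implies $\alpha_{min} \geq \pp$, and that no additional constraint on the computational parameters $i_0, j_0, n_{Time}, N_{period}$ is needed beyond their being in $\N$, which the propositions already accommodate. One might also remark that the conclusion $\ell_{K_i} \leq x \leq u_{K_i}$ furnished by each proposition is stronger than strictly needed for exhaustiveness, but it comes for free and is what downstream results (the branch and prune algorithm) will actually consume.
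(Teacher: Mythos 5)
Your proposal is correct and follows exactly the paper's own argument: a case split on $\bar{q}(x) \leq 3$ versus $\bar{q}(x) \geq 3$, invoking Propositions \ref{prop:InitialShort} and \ref{prop:InitialLong} respectively, and concluding via Definition \ref{def:IalphaEx}. The additional remarks on hypothesis checking are fine but add nothing beyond the paper's two-line proof.
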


\begin{proof}
	Suppose that $x \in \cX$ is SOPS to Wrights equation. 
	If $ \bar{q}(x) \leq 3$, then by Proposition \ref{prop:InitialShort} it follows that $ \kappa(x) \in K_1$. 
	If $ \bar{q}(x) \geq 3$, then by Proposition \ref{prop:InitialLong} it follows that $ \kappa(x) \in K_2$. 
	Hence the set $  K_1 \cup K_2  $ is $I_\alpha$-exhaustive. 
\end{proof}

\begin{proof}[Proof of Lemma \ref{prop:PeriodBound}]
	
	Let $ p $ denote the period of a SOPS $x \in \cX$. 
	By assumption $ x(p)= x(q) =0$,  so by the fundamental theorem of calculus we have that for any SOPS $x$, 
\[
0 = x(p)- x(q)	  = \int_q^p x'(t) dt  =  \int_q^p - \alpha ( e^{x(t-1)} -1) dt = \int_{q-1}^{p-1} ( e^{x(t)} -1) dt	.
\]
	
	Recall that any SOPS $ x(t)$ is positive for $ t \in ( 0 , q)$ and negative for $ t \in ( q , p)$. 
	Hence the integrand above is positive on $ (q-1,q)$ and negative on $ ( q , p-1)$, thus producing the following estimate:
	\begin{equation}
	\int_{q-1}^q | e^{x(t)} -1| dt = \int_q^{p-1} |e^{x(t) }-1| dt  .
	\label{eq:PeriodOne}
	\end{equation}
	For $ t \in (q-1,q)$ the function $x(t)$ is positive, whereby $|e^{x(t)} -1| =  \max\{e^{x(t)} -1,0\}$. 
	For the definitions of $ L^{+}$ and $U^{+}$ given in \eqref{eq:Udef} and \eqref{eq:Ldef}, it follows that  $ L^{+}$ and $U^{+}$  bound the LHS of  \eqref{eq:PeriodOne} as described below:
	\[
	L^{+} 
	\leq \int_{q-1}^q \max\{ e^{\ell(t)} -1,0\}  dt 
	\leq \int_{q-1}^q |e^{x(t)} -1|  dt 
	\leq \int_{q-1}^q \max\{ e^{u(t)} -1,0\}  dt 
	\leq U^{+}.
	\]
	We estimate the RHS of \eqref{eq:PeriodOne} using the two sums below: 
	\[
	L^{-}_{1} + L^{-}_{2} 
	\leq \int_q^{p-1} | e^{x(t)} -1| dt 
	\leq U^{-}_{1} + U^{-}_{2}
	\]
	where the constants $L^{-}_{1}, L^{-}_{2} , U^{-}_{1} , U^{-}_{2}$ are appropriately defined so that
	\begin{eqnarray}
	L^{-}_{1} \leq \int_{q}^{q+1} | e^{x(t)} -1 | dt 	 \leq U^{-}_{1} \label{eq:LUn1} \\		
	L^{-}_{2} \leq \int_{q+1}^{p-1} | e^{x(t)} -1 | dt \leq U^{-}_{2} . \label{eq:LUn2}
	\end{eqnarray}
	For $ t \in (q,q+1)$ the function $x(t)$ is negative, whereby $|e^{x(t)} -1| =  - \min\{e^{x(t)} -1,0\}$. 
	It follows from  the definitions of $ L^{-}_{1}$ and $ U^{-}_{1}$ given in  \eqref{eq:Udef} and \eqref{eq:Ldef}  that  \eqref{eq:LUn1} is satisfied. 
	To define $ L^{-}_{2}$ and $U^{-}_{2}$ note that for the time period $ t \in [ q+1,p-1]$ we have that $ x'(t) >0$, whereby
	\begin{eqnarray*}
	x(t)  &\geq& x(q + 1) \\
	x(t)  &\leq& x(p-1)=x(-1) \leq u(-1).
	\end{eqnarray*} 
	By definition $m \leq x( q+1)$, and as  $ p-q = \bar{q}$ we can then define 
	\begin{align*}	
		U^{-}_{2} \bydef&\  \int_{q+1}^{p-1} | e^{m} -1 | dt  		&L^{-}_{2} \bydef&\  \int_{q+1}^{p-1} | e^{u(-1)} -1 | dt \\
		=&\ (\bar{q} -2) |e^m -1| 							&=&\ (\bar{q} -2) |e^{u(-1)} -1| .
	\end{align*}
	Using these definitions, \eqref{eq:LUn2} is  satisfied. 
	From \eqref{eq:PeriodOne},  we get the following upper and lower bounds on $ \bar{q}$, from which \eqref{eq:PeriodBound} follows. 
	\begin{align*}
	L^{-}_{1} + L^{-}_{2} \leq & \ U^{+} 									&U^{-}_{1} + U^{-}_{2} \geq &\ L^{+} \\
	(\bar{q} -2) |e^{u(-1)}-1| \leq&\ U^{+} - L^{-}_{1} 					&(\bar{q} -2) |e^{m}-1| \geq&\ L^{+} - U^{-}_{1} \\
	\bar{q} \leq&\ 2 + \frac{U^{+} - L^{-}_{1}}{|e^{u(-1)} -1|}			&\bar{q} \geq&\ 2 + \frac{L^{+} - U^{-}_{1}}{|e^{m} -1|}. \qedhere
	\end{align*}

\end{proof}


\section{Bounding the Floquet Multipliers. }
\label{sec:Floquet}

In this section we describe how to estimate the Floquet multipliers of SOPS contained within the bounds derived in  Sections \ref{sec:BrandAndPrune} and \ref{sec:InitialBounds}.
This method follows the approach of \cite{xie1991thesis} with modifications to take advantage of numerical computations.  
To review this method, we first define a hyperplane in $C[-1,0]$ as
\[
H \bydef \{ \varphi \in C [-1,0] : \varphi(0) = 0 \} .
\]
For a function $y$ we define $ y_0 \in C[-1,0]$ to be the cut-off function of $ y$ on $ [-1,0]$, and for a constant $ L \in \R$ we define $ y_L \bydef [y(t+ L)]_0 $.  
Locally, one can construct a smooth Poincar\'e map $ \Phi : H \to H$ via the solution operator. 
If $x \in \mathcal{X}$ is a SOPS, then $ x_0$ is a fixed point of $ \Phi$, and
the Floquet multipliers of $x$ are the eigenvalues of $ D_\varphi \Phi(x_0)$. 
Of course $x_0$ is a trivial eigenfunction with associated eigenvalue $ \lambda = 1$. 
The nontrivial and nonzero Floquet multipliers of the SOPS can  be calculated by solving the following boundary value problem:

\begin{theorem}[See {\cite[Theorem 2.2.3]{xie1991thesis}}]
	\label{prop:BVP}
	Suppose that $x\in \cX$ is a SOPS to  \eqref{eq:FeedBack} with period $L$.  
	Define the linearized DDE below:
	\begin{equation}
	y'(t) = - \alpha e^{x(t-1)} y(t-1). \label{eq:Linearized} 
	\end{equation}
	Then   $ \lambda \neq 0$ is a nontrivial eigenvalue of $ D_{\varphi} \Phi(x_0)$ if and only if \eqref{eq:Multiplier} has a nonzero solution $h \in H$ for which 
	\begin{equation}
	- y(L) \frac{x_L'(t)}{x'(L)} + y_L(t) = \lambda h(t) 
	\label{eq:Multiplier}
	\end{equation}
	where the function $h$ is then an eigenfunction of $ D_{\varphi} \Phi(x_0)$ associated with $\lambda$, and $ y(t)$ solves \eqref{eq:Linearized} with initial condition $ y_0 = h$. 
	
\end{theorem}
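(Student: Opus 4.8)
The plan is to derive the boundary value problem from the standard characterization of Floquet multipliers for periodic solutions of DDEs, adapting the autonomous-equation structure to the fact that a SOPS $x\in\cX$ is normalized by a phase condition rather than truly a fixed point of a fixed time-$L$ map. First I would recall that near the SOPS $x$ one can, via the solution operator of \eqref{eq:FeedBack}, build a $C^1$ Poincar\'e return map on the hyperplane $H=\{\varphi\in C[-1,0]:\varphi(0)=0\}$: starting from $\varphi\in H$ one integrates \eqref{eq:FeedBack} forward until the solution next hits the section $\{z(0)=0\}$ with the appropriate crossing direction, defining $\Phi(\varphi)=z_\tau$ where $\tau=\tau(\varphi)$ is the (smoothly varying) return time, and $\tau(x_0)=L$. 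Since $x$ is a SOPS, $x_0$ is a fixed point of $\Phi$, and its nontrivial Floquet multipliers are by definition the eigenvalues of $D_\varphi\Phi(x_0)$ other than the trivial one. The linearization of \eqref{eq:FeedBack} about $x$ is exactly \eqref{eq:Linearized}, since $\frac{d}{dx}(-\alpha f(x))=-\alpha e^{x}$ with $f(x)=e^x-1$.

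Next I would differentiate $\Phi$ at $x_0$ by the product/chain rule, carefully tracking the contribution of the varying return time. Let $h\in H$ be a perturbation and let $y$ solve \eqref{eq:Linearized} with $y_0=h$; then the variation of the flow at time $t$ is $y(t)$, and the variation of the return time is $D\tau(x_0)h$, which is pinned down by requiring the perturbed solution to remain on the section at the perturbed crossing time. Writing the first-order expansion of the solution evaluated at the perturbed return time $L+D\tau(x_0)h$ and projecting onto $[-1,0]$, one obtains
\[
D_\varphi\Phi(x_0)h \;=\; y_L \;+\; \bigl(D\tau(x_0)h\bigr)\,x_L',
\]
where $x_L'(t)=x'(t+L)$ on $[-1,0]$. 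Imposing that the image lie in $H$, i.e. that the value at $t=0$ vanish, forces $0=y(L)+\bigl(D\tau(x_0)h\bigr)x'(L)$, hence $D\tau(x_0)h=-y(L)/x'(L)$ (note $x'(L)=x'(0)>0$ since the crossing is transverse). Substituting gives $D_\varphi\Phi(x_0)h=y_L-\frac{y(L)}{x'(L)}x_L'$, which is precisely the left-hand side of \eqref{eq:Multiplier}. Therefore $\lambda\neq 0$ is a nontrivial eigenvalue of $D_\varphi\Phi(x_0)$ with eigenfunction $h$ if and only if $-y(L)\frac{x_L'(t)}{x'(L)}+y_L(t)=\lambda h(t)$, with $y$ the solution of \eqref{eq:Linearized} launched from $y_0=h$; this is the claimed equivalence. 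One also checks that $h=x_0'$ recovers the trivial eigenvalue $\lambda=1$, consistent with the phrasing ``nontrivial.''

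The main obstacle, and the point requiring the most care, is the rigorous justification of the smoothness of the Poincar\'e map and of the return-time functional $\tau$, together with the transversality of the SOPS to the section $\{\varphi(0)=0\}$ — that is, $x'(0)>0$, which is built into the definition of $\cX$. This is where one must invoke the general theory of DDEs (smooth dependence of the solution operator on initial data, and the implicit function theorem applied to the crossing equation $z(\tau,\varphi)(0)=0$), and it is exactly the content already established in \cite[Theorem 2.2.3]{xie1991thesis}; so in practice the proof reduces to citing that result and verifying that the exponential form \eqref{eq:FeedBack} with linearization \eqref{eq:Linearized} matches the hypotheses. A secondary technical point is distinguishing genuinely nonzero eigenvalues from the essential spectrum at $0$: since the solution operator of a DDE is eventually compact, the nonzero spectrum of $D_\varphi\Phi(x_0)$ is a discrete set of eigenvalues of finite multiplicity, so restricting attention to $\lambda\neq 0$ in the statement is both necessary and sufficient for the boundary value problem \eqref{eq:Multiplier} to capture all the dynamically relevant multipliers.
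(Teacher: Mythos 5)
The paper does not actually prove this statement: it is imported verbatim from Xie's thesis (Theorem 2.2.3) and used as a black box, so there is no internal argument to compare against. Your derivation is the standard one and is essentially correct: linearize the flow to get \eqref{eq:Linearized}, expand the solution at the perturbed return time to first order to obtain $D_\varphi\Phi(x_0)h = y_L + (D\tau(x_0)h)\,x_L'$, and use the section condition $\Phi(\varphi)(0)=0$ together with transversality $x'(L)=x'(0)>0$ to solve for $D\tau(x_0)h=-y(L)/x'(L)$, which yields exactly the left-hand side of \eqref{eq:Multiplier}; the eigenvalue equation is then immediate, and your remarks about smooth dependence, the implicit function theorem for the crossing time, and eventual compactness (so that the nonzero spectrum consists of eigenvalues) are the right supporting ingredients. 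What your route buys over the paper's is an actual self-contained justification of the formula the whole of Section 4 rests on.

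One detail in your write-up is wrong, though it does not affect the main argument. You claim that $h=x_0'$ recovers the trivial eigenvalue $\lambda=1$. But $x_0'(0)=x'(0)>0$, so $x_0'\notin H$ and cannot be an eigenfunction of $D_\varphi\Phi(x_0):H\to H$ at all; moreover, formally substituting $y=x'$ (which does solve \eqref{eq:Linearized}) into the left-hand side of \eqref{eq:Multiplier} gives $-x'(L)\frac{x_L'(t)}{x'(L)}+x_L'(t)\equiv 0$, not $1\cdot x_0'(t)$. The correct statement is that the Poincar\'e construction \emph{removes} the trivial multiplier of the period-$L$ map rather than exhibiting it as an eigenvalue of $D_\varphi\Phi(x_0)$; the word ``nontrivial'' in the theorem is there precisely to set that direction aside. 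This is a cosmetic fix to a side remark, not a gap in the proof.
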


We are able to bound the Floquet multipliers by studying this  boundary value problem defined in~\eqref{eq:Linearized} and~\eqref{eq:Multiplier}, a calculation which is systematized through Algorithm \ref{alg:FloquetBound}. 
If this algorithm outputs a value $ \Lambda_{max} <1$  then all SOPS $x  \in  \kappa^{-1} ( K)$ are asymptotically stable. 
We are able to improve upon Xie's method in \cite{xie1991thesis,xie1993uniqueness} by repeating certain steps, somewhat analogous to the recursive bounds defined in Lemma \ref{prop:JonesMax}.  
The great advantage for doing this numerically as opposed to analytically is that these repetitions while tedious and time consuming for the mathematician   are ``effortless'' for the computer.

\begin{algorithm}
	\label{alg:FloquetBound}
Fix $I_\alpha = [ \alpha_{min} , \alpha_{max}] $ and $ K =  [q_{min}, q_{max} ] \times [ \bar{q}_{min}, \bar{q}_{max}] \times  [ M_{min}  , M_{max}] \subseteq \R^3$
with associated bounding functions $u_K,\ell_K$.  Furthermore, fix computational parameters $n_{Time},$ $N_{Floquet},$ $M_{Floquet} \in \N$. 
The output of the algorithm is $ \Lambda_{max} \in \R_+$.

	\begin{enumerate}
		\item
Define  $L_{min} \bydef q_{min} + \bar{q}_{min}$, $L_{max} \bydef q_{max} + \bar{q}_{max}$ and $I_L \bydef [ L_{min},L_{max}]$, and
		define the function $Y:[-1,0]\to \R$ by
		\begin{equation*}
		Y(t) \bydef 
				\begin{cases}
					1 & \mbox{ if } t \in [-1,0) \\
					0 & \mbox{ if } t =0 .
				\end{cases}
			\end{equation*}

		\item 
		\label{alg:step:Ybound}
		Extend the function $Y:[-1,L_{max}]\to \R$ by
		\begin{equation}
		Y(t) \bydef 
		\label{eq:AlgYbound}
			\alpha_{max} \int_0^t 
			\left(  Y(s-1)  \sup_{\ell_K \leq x \leq u_K} e^{x(s-1)} \right) ds 
			 \hskip .15\textwidth
			 \mbox{ if } t \geq 0,
		\end{equation}	
	evaluating the integral using an upper Riemann sum with a  uniform step size of  $1/ n_{Time}$. 
	Appendix \ref{sec:Appendix}  discusses in further details how we compute this integral.  
	
		\item
			Define $Z $ as below:
			\begin{equation*}
			Z(t) \bydef \left( \max_{L \in [L_{min},L_{max}]} Y(L)   \right)  
				\max_{\ell_K \leq x \leq u_K } 
			\left| 
				\frac{e^{x(t-1)}-1}{e^{x(-1)}-1} 
			\right|
			+ Y(t).
			\end{equation*}

		\item
			For $ t \in [-L_{min},0]$  define $Z_L$ as below:
			\begin{equation*}
			 Z_L(t)  \bydef \max_{ L_{min} \leq L \leq L_{max} }   Z(t+L).
			\end{equation*}
			
		\item
			For $t \in [-(L_{min}-1),0]$ refine the function $ Z_L$ by 
			\begin{eqnarray}
			Z'_L(-t)  &\bydef & 
			\alpha_{max} \int_{-t}^{0} 
			\left(
			Z_L(s-1)   \sup_{\ell_K \leq x \leq u_K} e^{x(s-1)} 
			\right) ds   	\label{eq:ZInt} \\
			Z_L(-t)  &\bydef &
			\min 
			\left\{ Z_L(-t) ,	Z'_L(-t) 
			\right\}, \nonumber
			\end{eqnarray}
			evaluating the integral using an upper Riemann sum with a  uniform step size of  $1/n_{Time}$.
			Appendix \ref{sec:Appendix}  discusses in further details how we compute this integral.  
			
			Repeat this step $M_{Floquet}$ number of times.

		\item
			Define 
			\begin{equation*}
				\Lambda_{max} \bydef \sup_{t \in [-1,0]} Z_L(t) .
			\end{equation*}

		\item
			If  $\Lambda_{max} < 1$ then \emph{STOP}.

		\item
			Otherwise define 
			\begin{equation}
			\label{eq:YRefContra}
				Y(t)  \bydef
				 \min 
				 \left\{ 
					 1 , Z_L(t)  
				 \right\}, \qquad   
				 \mbox{ for } t \in [-1,0]
			\end{equation}
			and GOTO Step \ref{alg:step:Ybound}.
			After reaching this step $ N_{Floquet}$ times, exit the program. 
			
	\end{enumerate}	  
	
\end{algorithm}

\begin{theorem}
	\label{prop:FloquetBound}
	Fix $I_\alpha = [ \alpha_{min} , \alpha_{max}]$ and  	$ K =  [q_{min}, q_{max} ] \times [ \bar{q}_{min}, \bar{q}_{max}] \times  [ M_{min}  , M_{max}] \subseteq \R^3$. 
		If Algorithm \ref{alg:FloquetBound} 
	terminates with $\Lambda_{max} <1$, then  all  SOPS $x \in \cX$ satisfying $ \kappa(x) \in K$ must be asymptotically stable.
	If the algorithm terminates having never reached Step 8, then the norm of all nontrivial Floquet multiplier are bounded above by $ \Lambda_{max}$. 	
\end{theorem}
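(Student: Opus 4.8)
The plan is as follows. Fix a SOPS $x \in \cX$ at a parameter $\alpha \in I_\alpha$ with $\kappa(x) \in K$, and let $L = q(x) + \bar q(x) \in I_L$ denote its period. Let $\lambda \neq 0$ be any nontrivial Floquet multiplier, with eigenfunction $h \in H$ normalized so that $\|h\|_{C[-1,0]} = 1$; thus $h(0) = 0$ and $|h(t)| \le 1$ on $[-1,0]$, with $\sup_{t \in [-1,0]} |h(t)| = 1$ (the supremum is attained, and not at $0$). Let $y$ solve the linearized equation~\eqref{eq:Linearized} with $y_0 = h$. Since $x$ is a SOPS, $x(-1) < 0$ and hence $x'(L) = -\alpha(e^{x(-1)}-1) \neq 0$, so Theorem~\ref{prop:BVP} gives $\lambda h(t) = -y(L)\,x_L'(t)/x'(L) + y_L(t)$ for $t \in [-1,0]$. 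The goal is to prove that the function $Z_L$ produced by Algorithm~\ref{alg:FloquetBound} satisfies $\sup_{t\in[-1,0]} Z_L(t) \ge |\lambda|$, under whatever hypothesis on $\lambda$ is actually forced by the passes the algorithm executes; all absolute values are complex moduli, which is harmless since~\eqref{eq:Linearized} has real coefficients.

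The heart of the proof is an inductive tracking of which function each of $Y$, $Z$, $Z_L$ dominates. The base case is Step 1: on the first pass $|y(t)| = |h(t)| \le 1 = Y(t)$ for $t \in [-1,0)$ and $|y(0)| = 0 = Y(0)$, unconditionally. Integrating~\eqref{eq:Linearized} from $t = 0$ (where $y(0) = h(0) = 0$) and arguing interval by interval, variation of parameters together with the upper Riemann sum estimate of Appendix~\ref{sec:Appendix} gives $|y(t)| \le Y(t)$ for all $t \in [-1, L_{max}]$ after Step 2. Next I would introduce $w(t) := y(t+L) - y(L)\,x'(t+L)/x'(L)$; since $x$ has period $L$, both $s \mapsto x'(s+L)$ and $s \mapsto y(s+L)$ solve~\eqref{eq:Linearized}, hence so does $w$, and $w_0 = \lambda h$ with $w(0) = 0$ by Theorem~\ref{prop:BVP}. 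Using $|y(L)| \le Y(L) \le \max_{L' \in I_L} Y(L')$, $|y(t+L)| \le Y(t+L)$, and $|x'(t+L)/x'(L)| = |(e^{x(t+L-1)}-1)/(e^{x(-1)}-1)| \le \max_{\ell_K \le x \le u_K}|(e^{x(t+L-1)}-1)/(e^{x(-1)}-1)|$ (here periodicity gives $x(L-1) = x(-1)$, and the remaining inequality is the pointwise bounding-function inequality), one checks that $|w(t)| \le Z(t+L) \le Z_L(t)$ for all $t \in [-L_{min},0]$: this is precisely where Steps 3 and 4 ``close the loop'' over the unknown period $L \in I_L$. Step 5's refinement is then legitimate: because $w$ solves~\eqref{eq:Linearized} and $w(0) = 0$, backward variation of parameters yields $|w(-t)| \le \alpha_{max}\int_{-t}^0 Z_L(s-1)\sup_{\ell_K \le x \le u_K} e^{x(s-1)}\,ds$ for $t \in [0, L_{min}-1]$ (the integrand samples $Z_L$ only on $[-L_{min},-1]$, where it still dominates $|w|$), so replacing $Z_L$ by its pointwise minimum with this new bound preserves $|w| \le Z_L$ on $[-L_{min},0]$, and this persists through all $M_{Floquet}$ repetitions. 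Step 6 then gives $\Lambda_{max} = \sup_{t\in[-1,0]} Z_L(t) \ge \sup_{t\in[-1,0]}|w(t)| = \sup_{t\in[-1,0]}|\lambda h(t)| = |\lambda|$.

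Assembling the conclusion accounts for the two cases in the statement. If the algorithm stops at Step 7 having never reached Step 8, then $Y$ was always the unconditional bound initialized in Step 1, so the entire chain above is unconditional and gives $|\lambda| \le \Lambda_{max} < 1$ for every nontrivial multiplier of every such SOPS; this proves both that all these SOPS are asymptotically stable and the claimed bound $|\lambda| \le \Lambda_{max}$. If instead the algorithm passed through Step 8 one or more times before stopping at Step 7, I would argue by contradiction. The only ingredient whose validity is not unconditional is the re-initialization $Y := \min\{1, Z_L\}$ in Step 8, which requires $|h(t)| \le Z_L(t)$ on $[-1,0]$, that is, $|\lambda| \ge 1$ (since $|\lambda|\,|h(t)| = |\lambda h(t)| = |w(t)| \le Z_L(t)$ there, the latter having been established unconditionally on the first pass). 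So, assuming $|\lambda| \ge 1$, every pass after the first becomes legitimate by induction on the pass number, and the final pass delivers $|\lambda| \le \Lambda_{max} < 1$, a contradiction; hence $|\lambda| < 1$, and all such SOPS are asymptotically stable.

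I expect the main obstacle to be the domain bookkeeping in the middle paragraph: checking that $Y$ is carried out far enough ($[-1, L_{max}]$), that $Z_L$ dominates $|w|$ on exactly the interval $[-L_{min}, 0]$ needed to feed the backward integral in Step 5, and that every estimate uses only the pointwise bounds $\ell_K \le x \le u_K$ and the membership $L \in I_L$, never the exact value of $L$. The conceptual point --- separating the unconditional bound obtained on the first pass from the conditional ``$|\lambda| \ge 1$'' bounds obtained on later passes, which is exactly why the theorem is phrased as two separate implications --- is simple once isolated but must be threaded carefully through the loop structure of Algorithm~\ref{alg:FloquetBound}.
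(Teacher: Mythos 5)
Your proposal is correct and follows essentially the same route as the paper's proof: your $w$ is exactly the paper's $z_L$, the pass-by-pass domination $|y|\le Y$, $|w|\le Z_L$ is established step by step just as in the paper, and the separation into an unconditional first-pass bound versus the conditional (assume $|\lambda|\ge 1$, derive $|\lambda|\le\Lambda_{max}<1$, contradiction) treatment of Step 8 is precisely the paper's argument. The only cosmetic differences are that you phrase the contradiction hypothesis for an arbitrary multiplier with $|\lambda|\ge 1$ rather than for the largest one, and you make explicit a few points (e.g.\ $x'(L)\ne 0$, complex moduli) that the paper leaves implicit.
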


\begin{proof}

	Fix some $ x \in \cX$ for which $\kappa(x) \in K$. By the definition made in Step 1, the period of $ x$ is some $L \in I_L$. 
	We use Theorem  \ref{prop:BVP} to estimate the range of Floquet multipliers of $x$. 
	That is, fix $\lambda \in \C$ and $h \in H$ and suppose that $ ( \lambda , h )$ is a solution to \eqref{eq:Multiplier}. 
	Define $y(t)$ to be the solution of~\eqref{eq:Linearized} through $h$, define $ z $ as 
\begin{equation}
z(t) \bydef - y(L) \frac{x'(t)}{x'(L)} + y(t) \
\label{eq:ZDef}
\end{equation}
and define $z_L(t) \bydef z(t+L)$. 
Hence $(\lambda,h)$ is a solution to~\eqref{eq:Multiplier} if and only if $ z_L(t) = \lambda h(t)$ for $ t \in [-1,0]$.  
As~\eqref{eq:Linearized} is a linear DDE, we may assume without loss of generality that $ \sup_{t \in [-1,0]} |h(t)| =1$. 
Thereby, it follows that 
\begin{equation}
| \lambda | = \sup_{t \in [-1,0]} | z_L(t)|.
\label{eq:LambdaZBound}
\end{equation}
If we can show that the RHS of~\eqref{eq:LambdaZBound} is less than $1$ uniformly for $ x \in \kappa^{-1} ( K)$, then we will have proven that all such SOPS are asymptotically stable. 
We prove that Steps 1-7 of Algorithm \ref{alg:FloquetBound}  produce functions $Y$, $Z$ and $Z_L$ and  a bound $\Lambda_{max}$ which satisfy the following inequalities uniformly for $x \in \kappa^{-1}(K)$
\begin{align*}
	|y(t) | &\leq Y(t), &
	|z(t) | & \leq  Z(t), &
	|z_L(t)|& \leq Z_L(t), &
	|\lambda| &\leq  \Lambda_{max}.
\end{align*}
We describe the results of each step of Algorithm \ref{alg:FloquetBound} in order, and then discuss how Step 8 affects what we may deduce about the output $\Lambda_{max}$.

\begin{enumerate}
	\item  By definition, if $h \in H $ then $ h(0) =0$, and by assumption $ |h(t)| \leq 1$ for $ t \in [-1,0]$.  Thereby our definition of $Y(t)$ in Step 1 satisfies $ |y(t) | \leq Y(t)$ for $t \in [-1,0]$.  
	
	\item 
	By definition $y$ solves the linear DDE in~\eqref{eq:Linearized}. 
	By variation of parameters it follows that 
	\begin{equation*}
y(t) = \int_0^t - \alpha e^{x(s-1)} y(s-1) ds
	\end{equation*}
	for all $ t \geq 0$. 
	Equation~\eqref{eq:AlgYbound} follows from this by taking a supremum over $ \alpha \in I_\alpha$ and $ \ell_K \leq x \leq u_K$. 
	Thereby, Step 2 produces a function $Y$ satisfying $ |y(t)| \leq Y(t)$ for $ t\geq 0$.

	\item
	Step 3 defines a function $Z$ to bound the norm of $z$ defined in~\eqref{eq:ZDef}. 
	 As $ x'(t) = - \alpha (e^{x(t-1)} -1)$  it follows that 
	 \[
	 |z(t)| \leq \left|y(L) \frac{e^{x(t-1)}-1}{e^{x(L-1)} -1} \right| + |y(t)|.
	 \]
	By periodicity, we may replace $ x'(L-1)$ with $ x'(-1)$.  
	By taking a supremum over $ L \in I_L$ and $ \ell_K \leq x \leq u_K$, it follows that the function defined in Step 3 satisfies $ |z(t)| \leq Z(t)$.  
	
	\item

	Since $L \in I_L$ and $ |z(t)| \leq Z(t)$, we obtain the estimate for $ t \in [ - L_{min} ,0 ]$ below:
		\[
			|z(t+L)|
			\leq Z(t+L) 
			\leq \max_{L_{min} \leq L \leq L_{max}} Z(t + L) 
			= Z_L(t).
		\]
	Since by definition $ z_L(t) = [z(t+L)]_0$, then for the definition of $ Z_L$ in Step 4, we have $|z_L(t)| \leq Z_L(t)$ for $ t \in [-1,0]$. 

	\item
	Note that both $y$ and $ x'$ satisfy~\eqref{eq:Linearized}, so by linearity $z$ solves~\eqref{eq:Linearized}.  
	Since $z_L(0) =0$, we obtain the following estimate using variation of parameters:
	\[
		z_L(-t) = - \int_{-t}^0 - \alpha e^{x(s-1)} z_L(s-1) ds .
	\]
	By taking the suprema over $\alpha \in I_\alpha $ and $ \ell_{K} \leq x \leq u_K$ as in Step 5, we obtain a refinement for which $ |z_L(t)| \leq Z_L(t)$.  
	This refinement can be repeated any number of times. 
	
	\item
	If $(\lambda,h)$ solves~\eqref{eq:Linearized}, then by~\eqref{eq:LambdaZBound} we obtain the following: 
\[
		|\lambda| = \sup_{t \in [-1,0]} |z_L(t)| 
		\leq \sup_{t \in [-1,0]} Z_L(t) 
		= \Lambda_{max}.
\]
	Hence $ |\lambda| < \Lambda_{max}$ uniformly for $ x \in \kappa^{-1} (K)$.

	\item
	We have shown that $ |\lambda| \leq  \Lambda_{max}$ for any Floquet multiplier $ \lambda$.  
	If $ \Lambda_{max} <1$, then it follows that $ x$ is asymptotically stable.

	\item
		If $ \Lambda_{max} \geq 1$, then we make the assumption that $x$ is not asymptotically stable for the sake of contradiction. 
		Then the largest Floquet multiplier $\lambda_{max}$ of $ x$ satisfies $ |\lambda_{max} | \in [ 1, \Lambda_{max}]$.  
		If $ h$ is an eigenfunction associated with $\lambda_{max}$, then $z_L(t) = \lambda_{max} \cdot h(t)$ for $ t \in [-1,0]$ and 
		furthermore $|h(t)| = |\lambda_{max}|^{-1} |z_L(t)| \leq |z_L(t)|$. 
		Hence for all $ t \in [-1,0]$ we may assume that the eigenfunction $h(t)$ satisfies the inequality: 
		\[
		|h(t)| \leq \min\{ 1 , |z_L(t)| \}.
		\]
		By definition  $y(t) = h(t)$ for $t \in [-1,0]$.  
		Hence for our refinement of $ Y$ in~\eqref{eq:YRefContra} it follows that $ | y (t) | \leq Y(t)$ for $ t \in [-1,0]$. 
\end{enumerate}

If the algorithm terminates having never passed through Step 8, then  $ |\lambda| \leq  \Lambda_{max} <1$ for all solutions $(\lambda,h)$ to~\eqref{eq:Multiplier} uniformly  for all SOPS $x \in \kappa^{-1}(K)$.  
If the program terminates having passed through Step 8 at least once, 
then it has shown that every solution $ (\lambda,h)$ to~\eqref{eq:Multiplier} satisfies $|\lambda|<1$ under the assumption that there exists a solution for which $ |\lambda| \geq 1$, a contradiction. 
In this case we have shown that $x$ is asymptotically stable without calculating an explicit bound on its Floquet multipliers.  \qedhere

\end{proof}


\section{A Comprehensive Algorithm}
\label{sec:CompResults}

We state our branch and prune algorithm in Algorithm \ref{alg:BranchAndPrune}, and describe how we use it to prove the uniqueness of SOPS to Wright's equation in Algorithm \ref{alg:Main}. 
 Algorithm \ref{alg:BranchAndPrune} takes as input an interval $ I_{\alpha } \subseteq \R$ and constructs an $I_\alpha$-exhaustive set. 
Furthermore, this algorithm uses several computational parameters:  $ \epsilon_1,\epsilon_2  \in \R$ which  defines  the algorithm's stopping criterion,  $ n_{Time} \in \N$ which defines the time resolution used in representing bounding functions on the computer, and  $N_{prune} \in \N$ which defines  the number of times the pruning algorithm is performed before branching.  
Additionally it requires the computational parameters $ i_0, j_0, N_{Period} \in \N$ needed for running Algorithms \ref{alg:InitialBoundsleq3} and \ref{alg:InitialBoundsgeq3}. 
As we have stated before, this is a canonical algorithm which terminates in finite time (see   \cite{scholz2011deterministic,ratschek1988new,horst2013global}).


\begin{algorithm}  \label{alg:BranchAndPrune}
	The input is an interval $ I_\alpha = [ \alpha_{min} , \alpha_{max}]$ and computational parameters $\epsilon_1,\epsilon_2>0$ and  $i_0,j_0, n_{Time} , N_{Period}, N_{Prune} \in \N$. 
	The output is a set $ \cA = \{ K_i : K_i \subseteq \R^3 \}$ 
	and an associated collection of bounding functions $ \{ u_K, \ell_K \}_{K \in \cA} $.

	\begin{enumerate}
		\item  Construct regions $K_1$ and $K_2$ according to Algorithms \ref{alg:InitialBoundsleq3} and \ref{alg:InitialBoundsgeq3} respectively. 
		Define the sets $ \cS = \{ K_1 , K_2\}$ and $ \cA = \emptyset$. 
		
		\item 	If $\cS = \emptyset$ then return $\cA$ and \emph{STOP}. 
		
		\item 	Define $K$ to be an element of $\cS$ and remove   $K$  from $ \cS$. 
		
		\item 	Define 	$\{K',u_{K'},\ell_{K'}\} $ to be the output of Algorithm \ref{alg:Prune} using input $K,u_K,\ell_K$ and computational parameter $n_{Time}$.  Then redefine $\{K,u_{K},\ell_{K}\} \bydef \{K',u_{K'},\ell_{K'}\} $.  	Repeat this step $N_{Prune}$ times.

		\item 	
		If the diameter of $K$ is less than $\epsilon_1$  and $\bar{q}<3$, or the diameter of $K$ is less than $ \epsilon_2$ and $\bar{q} \geq 3$, then add $K $ to $\cA$ and \emph{GOTO  Step 2}.
		
		\item 	
		Subdivide $K$  along its fattest dimension into two regions $K_A$ and $K_B$. 
		That is, write $ K = I_1 \times I_2 \times I_3$ where each $I_i$ is  given by the interval $ I_i = [a_i,b_i]$ and  
		 fix some $j \in \{ 1,2,3\}$ which maximizes $|b_j - a_j|$. 
		The regions $ K_A \bydef I_{1}' \times I_{2}' \times I_{3}'$ and $ K_B \bydef I_{1}'' \times I_{2}'' \times I_{3}''$ are defined according to the following formulas
		\begin{align}
		I_i' \bydef &
		\begin{cases}
		[a_i,b_i] & \mbox{ if } i \neq j \\
		[a_i, (a_i+b_i)/2] & \mbox{ if } i =j
		\end{cases} 
		& 
		I_i'' \bydef &
		\begin{cases}
		[a_i,b_i] & \mbox{ if } i \neq j \\
		[ (a_i+b_i)/2,b_i] & \mbox{ if } i=j.
		\end{cases}
		\end{align}

		\item Add to $ \cS$ the regions $K_A$ and $K_B$, each with associated bounding functions $u_K$ and $\ell_K$. Then GOTO  Step 2.

	\end{enumerate}
	
\end{algorithm}

\noindent
As a notational convention for the next two theorems we define $ \bigcup \cS \bydef  \bigcup_{K\in\cS} K$.

\begin{theorem}
	\label{prop:BranchNPrune}
	Fix an interval $I_\alpha = [ \alpha_{min} , \alpha_{max}]$ such that $\alpha_{min} > \pp$, and fix any  selection of computational parameters $\epsilon>0$ and  $i_0,j_0, n_{Time} , N_{Period}, N_{Prune} \in \N$. 
	If $ \cA$ is the output of Algorithm  \ref{alg:BranchAndPrune} with these inputs, then $\bigcup \cA$ is  $I_\alpha$-exhaustive.
	
\end{theorem}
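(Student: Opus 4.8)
The plan is to show that the loop-invariant property ``$\bigcup\cS \cup \bigcup\cA$ is $I_\alpha$-exhaustive'' is maintained at every iteration of Algorithm~\ref{alg:BranchAndPrune}, and that upon termination $\cS=\emptyset$, so that $\bigcup\cA$ alone is $I_\alpha$-exhaustive. Concretely, I would fix a SOPS $x\in\cX$ at some parameter $\alpha\in I_\alpha$ and track the region containing $\kappa(x)$ through the algorithm.

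First, for the base case (Step~1), Corollary~\ref{prop:ConstructInitialBounds} gives that $K_1\cup K_2$ is $I_\alpha$-exhaustive, i.e. $\kappa(x)\in K_1\cup K_2$; moreover by Propositions~\ref{prop:InitialShort} and~\ref{prop:InitialLong} the associated functions $u_{K_i},\ell_{K_i}$ are genuine bounding functions for $K_i$. Since $\cA=\emptyset$ at this point, $\bigcup\cS\cup\bigcup\cA$ is $I_\alpha$-exhaustive, with valid bounding functions attached to each region. Then I would verify that each of Steps~3--7 preserves the invariant. In Step~3 an element $K$ is moved from $\cS$ to a temporary variable, so the union $\bigcup\cS\cup\{K\}\cup\bigcup\cA$ is unchanged. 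Step~4 applies Algorithm~\ref{alg:Prune} $N_{Prune}$ times; by Proposition~\ref{prop:Prune}, if $\kappa(x)\in K$ then $\kappa(x)\in K'$ and $\ell_{K'}\le x\le u_{K'}$, so the output $K'$ still contains $\kappa(x)$ whenever $K$ did, and still carries valid bounding functions — in particular if $K'=\emptyset$ then $\kappa(x)\notin K$ to begin with, so nothing is lost. Step~5 moves $K$ into $\cA$ unchanged. Step~6 splits $K = K_A\cup K_B$ exactly (since $I_j=[a_j,(a_j+b_j)/2]\cup[(a_j+b_j)/2,b_j]$ and the other factors are untouched), so $\kappa(x)\in K$ implies $\kappa(x)\in K_A\cup K_B$; and the restrictions of $u_K,\ell_K$ remain bounding functions for the sub-regions since the defining inequality $\ell_K(t)\le x(t)\le u_K(t)$ is inherited on any subset of $K$. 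Step~7 returns both pieces to $\cS$. In every case $\bigcup\cS\cup\bigcup\cA$ retains every $\kappa$-image of every SOPS, with valid bounding functions throughout.

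Next I would invoke termination: as noted in the text, branch-and-prune of this classical type halts in finite time (every region is eventually subdivided below the diameter thresholds $\epsilon_1$ or $\epsilon_2$ and then moved to $\cA$ in Step~5), and the algorithm exits only via Step~2 with $\cS=\emptyset$. At that moment the invariant reads: $\bigcup\cA = \bigcup\cS\cup\bigcup\cA$ contains $\kappa(x)$ for every SOPS $x\in\cX$ at every $\alpha\in I_\alpha$; that is precisely the statement that $\bigcup\cA$ is $I_\alpha$-exhaustive (Definition~\ref{def:IalphaEx}).

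I expect the only genuinely delicate point to be the bookkeeping of the bounding functions across the branching step: one must check that ``$u_K,\ell_K$ are bounding functions associated with $K$'' descends to ``$u_K,\ell_K$ are bounding functions associated with $K_A$'' (and $K_B$), which is immediate because the class of SOPS with $\kappa$-image in $K_A$ is a subset of those with $\kappa$-image in $K$, so the pointwise inequalities $\ell_K\le x\le u_K$ still hold for all of them. A secondary point is making precise that termination forces exit through Step~2; this is the standard finiteness argument for deterministic branch-and-bound, cited via \cite{scholz2011deterministic,ratschek1988new,horst2013global}, and I would not reprove it. Everything else is a straightforward induction on the iteration count with the three previously established propositions (Corollary~\ref{prop:ConstructInitialBounds}, Propositions~\ref{prop:InitialShort}, \ref{prop:InitialLong}, and Proposition~\ref{prop:Prune}) doing the real work.
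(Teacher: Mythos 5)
Your proposal is correct and follows essentially the same route as the paper: an induction showing the loop invariant ``$\bigcup\cS\cup\bigcup\cA$ is $I_\alpha$-exhaustive'' holds each time the algorithm reaches Step~2, using Corollary~\ref{prop:ConstructInitialBounds} for the base case and Proposition~\ref{prop:Prune} for Step~4, with exit only possible when $\cS=\emptyset$. Your extra care about the bounding functions descending to $K_A$ and $K_B$ is a fine point the paper leaves implicit, but it does not change the argument.
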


\begin{proof}

We prove by induction that every time the algorithm arrives at Step 2, then $\bigcup \cS \cup  \bigcup \cA $ is $I_\alpha$-exhaustive. 
This suffices to prove the theorem, as the only way for the algorithm to exit is on Line 2 when $\cS = \emptyset$. 

For the initial case, 
the set $\bigcup \cS = K_1 \cup K_2$ produced in Step 1 is $I_\alpha$-exhaustive by  Proposition \ref{prop:ConstructInitialBounds}. 
The result of Step 3 simply rearranges the collection of regions, after which $\bigcup \cS \cup  \bigcup \cA \cup K$ is $I_\alpha$-exhaustive. 
In Step 4, this $I_\alpha$-exhaustivity  is maintained when replacing $K$ with the output of Algorithm \ref{alg:Prune} as a direct result of Proposition \ref{prop:Prune}.   
If Step 5 adds $K$ to $\cA$, then when the algorithm arrives at Step 2 the set $ \bigcup \cS \cup \bigcup \cA$ will be  $I_\alpha$-exhaustive. 
Otherwise Step 6 will divide $K$ into two regions $K_A$ and $K_B$ for which $ K = K_A \cup K_B$. 
Then in Step 7 both $K_A$ and $K_B$ are then added to $ \cS$, after which $ \bigcup \cS \cup \bigcup \cA$ is still $I_\alpha$-exhaustive.  \qedhere

\end{proof}

We are finally able to state our algorithm which can prove  that Wright's equation has a unique SOPS over a given range of parameters. 
\begin{algorithm}
	\label{alg:Main}
	The input is an interval $ I_\alpha = [ \alpha_{min} , \alpha_{max}]$ and  computational parameters $ \epsilon_1, \epsilon_2 >0$ and $i_0,j_0, n_{Time},N_{Period}, N_{Prune}, N_{Floquet}, M_{Floquet} \in \N$.  
	The output is a True or False statement.  
	
	\begin{enumerate}
		
		\item Run  Algorithm \ref{alg:BranchAndPrune}  with input $I_\alpha$ and computational parameters $ \epsilon_1,\epsilon_2$, $i_0$, $j_0$, $n_{Time}, N_{Period}$ and $N_{Prune}$. Define $ \cA$ and $\{ u_K,\ell_K\}_{K \in \cA}$ to be its output.

		\item For each $K  \in \cA$ calculate $ \Lambda_{max}(K) $ to be the output of Algorithm \ref{alg:FloquetBound}, run with input $I_\alpha$, $K$, $ u_K$, $\ell_K$, and computational parameters $n_{Time}, N_{Floquet}$ and $M_{Floquet}$.

		\item If $ \Lambda_{max}(K)  < 1 $ for all $ K \in \cA$, then return  \emph{TRUE}. 
		Otherwise return \emph{FALSE}. 
	\end{enumerate}
	
\end{algorithm}

\begin{theorem}
	\label{prop:Main}
	Fix an interval $ I_\alpha = [ \alpha_{min} , \alpha_{max}]$ with $\alpha_{min} > \pi/2$. 
	If Algorithm \ref{alg:Main} returns the output \emph{TRUE} for any selection of computational parameters $ \epsilon>0$, and $i_0,j_0,  n_{Time}, N_{Prune}, \\ N_{Floquet}, M_{Floquet} \in \N$, then  
	there exists a unique SOPS to Wright's equation for all $ \alpha \in I_\alpha$. 
\end{theorem}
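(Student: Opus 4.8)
The plan is to chain together the three preceding theorems into a single deduction. First I would observe that the statement is essentially a corollary of Theorem \ref{prop:BranchNPrune}, Theorem \ref{prop:FloquetBound}, and Theorem \ref{prop:Xie}, with Algorithm \ref{alg:Main} serving merely as the bookkeeping device that invokes all of them in the right order. So the proof is really a matter of carefully threading the logical implications together, and checking that the hypotheses line up (in particular that $\alpha_{min} > \pp$ is exactly what each cited result requires).

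Concretely, I would argue as follows. Suppose Algorithm \ref{alg:Main} returns \emph{TRUE}. Step 1 of that algorithm produces, via Algorithm \ref{alg:BranchAndPrune}, a finite set $\cA$ of rectangles in $\R^3$ together with associated bounding functions. By Theorem \ref{prop:BranchNPrune} (whose hypothesis $\alpha_{min} > \pp$ is met), $\bigcup \cA$ is $I_\alpha$-exhaustive; that is, every SOPS $x \in \cX$ to Wright's equation at any parameter $\alpha \in I_\alpha$ satisfies $\kappa(x) \in K$ for some $K \in \cA$. Step 2 then runs Algorithm \ref{alg:FloquetBound} on each such $K$, and since the overall output is \emph{TRUE}, Step 3 guarantees $\Lambda_{max}(K) < 1$ for every $K \in \cA$. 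By Theorem \ref{prop:FloquetBound}, for each $K \in \cA$ every SOPS $x \in \cX$ with $\kappa(x) \in K$ is asymptotically stable. Combining these two facts: given an arbitrary SOPS $x \in \cX$ at a parameter $\alpha \in I_\alpha$, exhaustivity places $\kappa(x)$ in some $K \in \cA$, and then the Floquet bound for that $K$ forces $x$ to be asymptotically stable. Hence \emph{every} SOPS to Wright's equation at every $\alpha \in I_\alpha$ is asymptotically stable. Since $\alpha_{min} > \pp$, Theorem \ref{prop:Xie} applies and yields a unique SOPS (up to time translation) for each $\alpha \in I_\alpha$, which is precisely the claim.

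A couple of small points need attention. One should note that the space $\cX$ was set up so that, up to a time translation, every SOPS of Wright's equation lies in $\cX$; thus "unique SOPS in $\cX$" is the same as "unique SOPS up to time translation," and the reduction to stability via $\cX$ is legitimate. One should also confirm that the quantifier over computational parameters is handled correctly: the conclusion must hold for \emph{any} admissible choice of $\epsilon_1,\epsilon_2, i_0, j_0, n_{Time}, N_{Period}, N_{Prune}, N_{Floquet}, M_{Floquet}$, but this is automatic because Theorems \ref{prop:BranchNPrune} and \ref{prop:FloquetBound} are themselves stated for arbitrary such parameters — the soundness (as opposed to sharpness) of the algorithms does not depend on the parameter values. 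Finally, one should remark that Algorithm \ref{alg:BranchAndPrune} terminates in finite time, so $\cA$ is genuinely a finite set and Step 2 of Algorithm \ref{alg:Main} is a finite computation; this is the canonical termination property of branch-and-prune already noted in the text.

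The proof has no real mathematical obstacle — it is a pure "glue" argument — so the only thing to be careful about is that nothing is smuggled in. The most delicate link is the passage from "$\Lambda_{max}(K) < 1$ for all $K \in \cA$" to "every SOPS is asymptotically stable": this relies on $\cA$ being exhaustive (so no SOPS escapes the case analysis) \emph{and} on the Floquet bound being valid uniformly over $\kappa^{-1}(K)$ for each $K$, both of which are exactly the content of the two cited theorems. Once those are in hand, invoking Theorem \ref{prop:Xie} is immediate. I therefore expect the write-up to be short, with the main (minor) subtlety being the explicit verification that the hypothesis $\alpha_{min} > \pp$ propagates correctly to every invoked result.
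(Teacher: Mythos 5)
Your argument is correct and follows essentially the same path as the paper's own proof: Theorem \ref{prop:BranchNPrune} gives $I_\alpha$-exhaustivity of $\bigcup\cA$, Theorem \ref{prop:FloquetBound} upgrades $\Lambda_{max}(K)<1$ to asymptotic stability of every SOPS with $\kappa$-image in $\bigcup\cA$, and Theorem \ref{prop:Xie} then yields uniqueness. The additional remarks on parameter quantification and finiteness of $\cA$ are sound but not needed beyond what the paper records.
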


\begin{proof}
	
	By Theorem \ref{prop:BranchNPrune} it follows that $ \bigcup \cA = \bigcup_{K\in \cA} K$ is an $I_\alpha$ exhaustive set. 
	That is, by Definition \ref{def:IalphaEx},  up to a time translation any SOPS to Wright's equation for parameter $\alpha \in I_\alpha$ can be expressed as a function $ x \in \cX$ for which $ \kappa (x) \in \bigcup \cA$. 
	If Algorithm \ref{alg:FloquetBound} 
	terminates with $\Lambda_{max}(K) <1$ for all $ K \in \cA$, then by Theorem \ref{prop:FloquetBound} it follows that any SOPS $x \in \cX$ satisfying $ \kappa(x) \in \bigcup \cA $ must be asymptotically stable. 
	Hence, by Theorem \ref{prop:Xie} it follows that there must be a unique SOPS to Wright's equation for each $\alpha \in I_\alpha$.  
\end{proof}

\section{Discussion}
\label{sec:Discussion}

In Algorithm \ref{alg:Main} we defined an algorithm which, if successful, proves the uniqueness of SOPS to Wright's equation for a finite range of parameters $ I_\alpha$. 
Below we describe how we applied this algorithm to prove Theorem \ref{prop:MainResult}.

\begin{proof}[Proof of Theorem \ref{prop:MainResult}]

	To prove  Theorem \ref{prop:MainResult} we divide the interval $[1.9,6.0]$ into various  subintervals $I_\alpha$, and then divide each of these intervals into further subintervals of width $ \Delta \alpha$. 
	For example, the interval $I_{\alpha} = [2.1,6.0]$ with $ \Delta \alpha = 0.1$  was divided into   subintervals   $[2.1,2.2]$, $[2.2,2.3]$, \dots , $[5.9,6.0]$. 
	The various computational parameters we used are given in the table below (see \cite{webpage_jones_conjecture} for associated MATLAB code). 

\begin{center}

$
	\begin{array}{ c |c c c c c c c c c c}
		I_\alpha 		& \Delta \alpha & n_{Time} & \epsilon_1 & \epsilon_2 & i_0 & j_0 & N_{Period} &  N_{Prune} & N_{Floquet} & M_{Floquet} \\ 
	\hline
		\,[1.90,1.96] 	& 0.01 & 128 & 0.02 & 0.25 & 2 & 20  & 10 & 4 & 20 & 5 \\
		\,[1.96,2.10] 	& 0.01 &  64 & 0.05 & 0.25 & 2 & 20  & 10 & 4 & 20 & 5 \\
		\,[2.10,6.00]	& 0.10 &  32 & 0.05 & 0.25 & 2 & 20  & 10 & 4 & 20 & 5 \\	
	\end{array}
$
	\captionof{table}{For descriptions of how these parameters affect Algorithm \ref{alg:Main}, refer to Algorithms \ref{alg:InitialBoundsleq3} and \ref{alg:InitialBoundsgeq3} for $i_0$, $j_0$ and $N_{Prune}$;  refer to Algorithm \ref{alg:FloquetBound} for $N_{Floquet}$ and $M_{Floquet}$; and refer to Algorithm \ref{alg:BranchAndPrune} for $\epsilon_1, \epsilon_2$ and  $N_{Prune}$.  }
\end{center}	
	For each of these parameter values, we ran Algorithm \ref{alg:Main} which returned \emph{TRUE} as its output. 
	By Theorem \ref{prop:Main} it follows that there must be a unique SOPS to Wright's equation for each $\alpha \in [1.9,6.0]$. 	
\end{proof}

As described in Theorem \ref{prop:FloquetBound}, if Algorithm \ref{alg:FloquetBound} terminates without having reached Step 8, then it produces explicit bounds on the Floquet multipliers of the SOPS to Wright's equation.  
These bounds are summarized in Figure \ref{fig:FloquetMultipliers}. 
In the range $ [2.2,6.0]$ Algorithm \ref{alg:FloquetBound} exits on Step 7, so by Theorem \ref{prop:FloquetBound} we obtain an upper bounds on the Floquet multipliers.  
In the regime $ \alpha \in [1.90 , 2.20]$  Algorithm \ref{alg:FloquetBound} only terminated after reaching Step 8 at least once, so we are only able to deduce that any non-trivial Floquet multiplier has modulus strictly bounded above by $1$.  
In total, the computation took 115 hours to run using a i7-5500U processor, and Algorithm \ref{alg:BranchAndPrune} accounted for 94\% of the computation time.

Running Algorithm \ref{alg:Main} at high values of $\alpha$ is computationally expensive.  
This is because the period length of SOPS to Wright's equation grows exponentially  \cite{nussbaum1982asymptotic}, whereby our algorithm's run time and memory requirements also increases  exponentially in $\alpha$.  
Nevertheless, proving Theorem \ref{prop:MainResult} with an upper limit of $ \alpha = 6$ is sufficient for our purposes considering the results in \cite{xie1991thesis} proved uniqueness for $ \alpha \geq 5.67$. 

\begin{figure}[h]
	\centering
	\includegraphics[width = 1\textwidth]{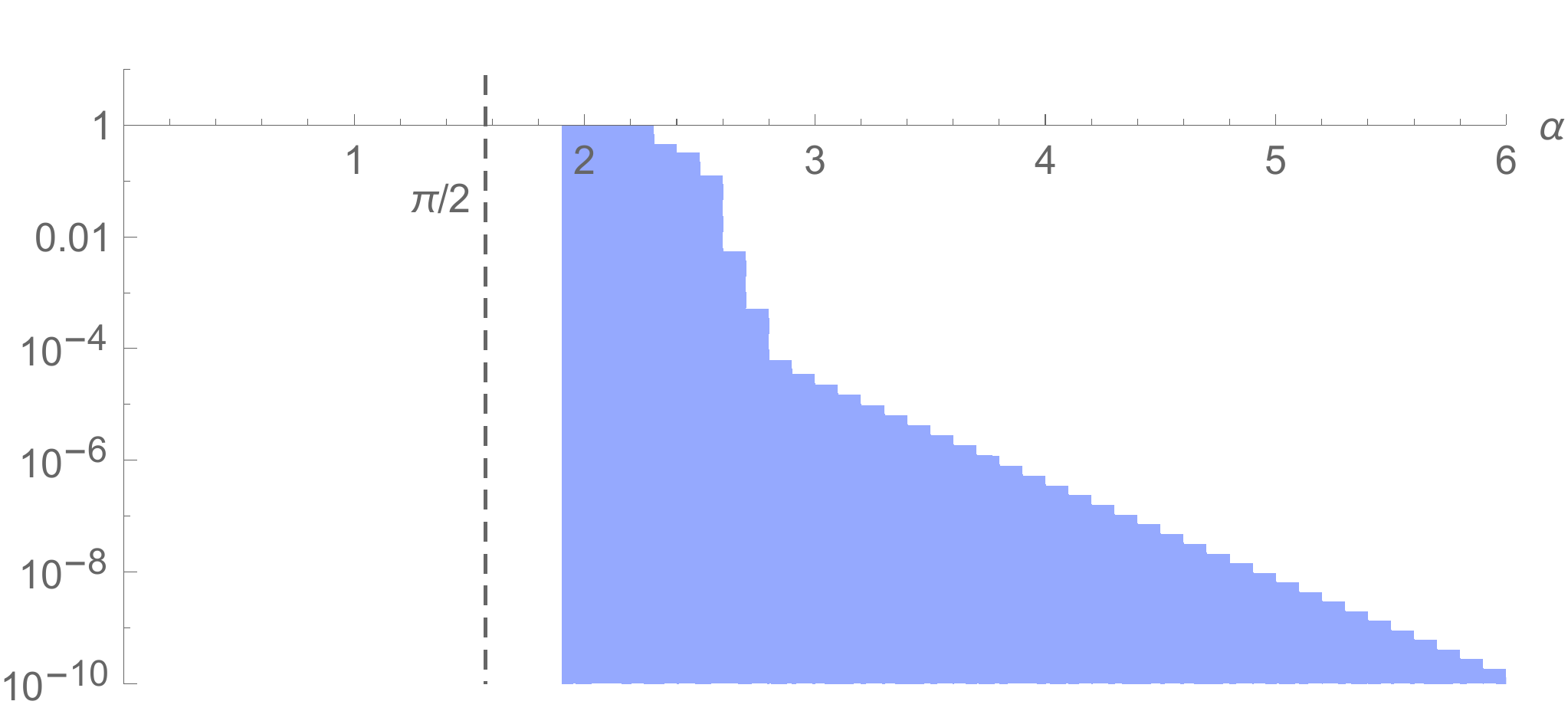}
	\caption{ An upper bound on the modulus of the Floquet multipliers for SOPS to Wright's equation for $ \alpha \in [1.9,6.0]$.}
	\label{fig:FloquetMultipliers}
\end{figure}

A different  challenge  presents itself for decreasing the lower limit of $\alpha = 1.9$ in Theorem \ref{prop:MainResult}. 
Namely, Xie's method for bounding the largest Floquet multiplier is not well suited to weakly attracting SOPS. 
Even when using precise numerical approximations (from \cite{lessard2010recent}) of SOPS to Wright's equation at  single values of $\alpha$, Algorithm \ref{alg:FloquetBound} was only able to show that the SOPS was asymptotically stable  for values of $\alpha$ no lower than $ 1.85$.  
By decreasing the parameters $ \Delta \alpha$  and $\epsilon_1$, and increasing the other computational parameters, we could expect the uniqueness result for $ \alpha \geq 1.9$ could be pushed closer to $ \alpha =1.85$.  
However we believe that new ideas are required in order to prove Conjecture \ref{prop:RemainingConjecture}.

\begin{footnotesize}


\end{footnotesize}

\appendix

\section{Appendix: Computational Considerations}
\label{sec:Appendix}

Interval arithmetic (e.g. see \cite{moore1966interval}) is an indispensable tool for producing computer-assisted proofs in nonlinear dynamics (e.g. see \cite{MR2652784,MR2807595,MR1420838}). 
As the name suggests, interval arithmetic extends arithmetic operations to intervals of the real numbers, such as:
\begin{align*}
	[a,b] + [c,d] \bydef& [a+c,b+d] & 	[a,b] - [c,d] \bydef& [a-d,b-c]. & 
\end{align*}
In this manner, if $x_1 \in [a,b]$ and $ x_2  \in [c,d]$ then $ x_1 + x_2 \in 	[a,b] + [c,d] $. 
Moreover,   for any function $ f: \R^n \to \R$ and its interval extension $ F$ we have the relation:
\[
f(x_1,x_2, \dots, x_n ) \in 
F\left( [a_1,b_1],[a_2,b_2], \dots , [a_n,b_n] \right) 
\]
for all $ x_i \in [a_i, b_i]$. 

When interval arithmetic is performed on a computer the endpoints are necessarily binary numbers, and outward rounding is used to ensure proper enclosure. 
This allows us to compute a verified enclosure of the value of a function on numbers not representable on a binary computer, such as $ \tfrac{1}{3}$. 
Furthermore this enables us to easily compute upper and lower bounds on a function over a rectangular domain of any size. 
While these bounds may not necessarily be sharp, they are guaranteed to be accurate.

To implement our algorithm we used \emph{Intlab}: an interval arithmetic package for Matlab \cite{rump1999intlab}. 
Some of the calculations we performed are a simple application of interval arithmetic, such as defining $ I_q, I_{\bar{q}}, I_M$ in Algorithm \ref{alg:InitialBoundsleq3}.  
However there is a nontrivial degree of complexity in how we store and represent the functions used in the algorithms, such as $u_K ,\ell_K$ in Algorithm \ref{alg:Prune} or $ Y,Z,Z_L$ in Algorithm \ref{alg:FloquetBound}. 
In a word, we defined these functions to be piecewise constant.

To explain our methodology, first fix a constant $n_{Time} \in \N$. 
To define an interval extension of a function $y: \R \to \R$, we  define a collection of intervals $ I_i^P , I_i^I \subseteq \R$ for $ i \in \Z$ and define $Y$ as follows:
\[
Y(t) = 
\begin{cases}
I_{i}^P  & \mbox{ if }  t = \tfrac{i}{n_{Time}} \\
I_{i}^I  & \mbox{ if }  t \in \left(  \tfrac{i}{n_{Time}} ,\tfrac{i+1}{n_{Time}}\right).
\end{cases}
\]
Of course any computer has finite memory, and so we would only store the function $ Y$ over a finite domain. 
Furthermore, as the bounding functions $ u ,\ell$ are intended to provide upper and lower bounds on a function $x$, we simply define an interval valued  function $X(t) = [ \ell(t) , u(t)]$.  
In Figure \ref{fig:FunctionStorage} we present a graphical representation of how we store such a function, wherein we have defined the function $X(t) $ for $t \in [-1,0]$ as follows: 
\begin{align*}
I_{-4}^P \bydef& [-2.0,-1.2]  &		I_{-4}^I \bydef& [-2.0,-0.9]  \\
I_{-3}^P \bydef& [-1.6,-0.9]  &		I_{-3}^I \bydef& [-1.6,-0.6]  \\
I_{-2}^P \bydef& [-1.2,-0.6]  &		I_{-2}^I \bydef& [-1.2,-0.3]  \\
I_{-1}^P \bydef& [-0.8,-0.3]  &		I_{-1}^I \bydef& [-0.8,-0.0]  \\
I_{0}^P  \bydef& [0.0,0.0]  &		
\end{align*}

For such functions, it is a straightforward procedure to calculate its supremum.  To calculate $\sup_{t \in [a,b]} x(t)$ one simply needs to compare the intervals $I_{i}^P$ for which $ a \leq \frac{i}{n_{Time}} \leq b$, and the intervals $I_{i}^I$ for which $ a-n_{Time}^{-1} < \frac{i}{n_{Time}} < b$. 
Both these collections of intervals are finite.  
For bounds which are defined to be the integrals of various functions, as in~\eqref{eq:Udef}
 and~\eqref{eq:Ldef} of Lemma \ref{prop:PeriodBound}, we use a Riemann sum of step size $1 / n_{Time}$.

Unfortunately there is a loss in fidelity when we numerically integrate these functions, as we do in Step 2 of Algorithm \ref{alg:Prune}. 
Therein we refine the values of  $u_{K'}(t_0+s),$ $\ell_{K'}(t_0+s),$ $u_{K'}(t_0-s)$ and  $\ell_{K'}(t_0-s)$,    where $t_0 = \tfrac{i_0}{n_{Time}}$ and $ s \in [0, \tfrac{1}{n_{Time}}]$.   
To just discuss the refinements of $ u_{K'}(t_0 + s)$ and $ \ell_{K'}(t_0 +s)$, if we choose $ s =  \tfrac{1}{n_{Time}}$, then this procedure refines the bound of $[\ell_{K'}(\frac{i_0+1}{n_{Time}}),  u_{K'}(\frac{i_0+1}{n_{Time}})]$, a value  which is stored in the interval $I_{i_0+1}^P$.  
However in order to refine $ I_{i_0}^I$ this interval must include $[\ell_{K'}(t_0 +s'),  u_{K'}(t_0 +s')]$ for all $ s' \in ( t_0,t_0 + \tfrac{1}{n_{Time}})$. 
This is represented in Figure \ref{fig:FunctionStorage}, where the darker red region represents the sharpest possible bounds able to be derived from in Step 2 of Algorithm \ref{alg:Prune} when integrating the initial data given above, and the pink region represents the values we store in the computer. 
When we define functions as integrals as in Steps 2 and 5 of Algorithm \ref{alg:FloquetBound} we use the same procedure.

\begin{figure}[h]
	\centering
	\includegraphics[width = 1.0\textwidth]{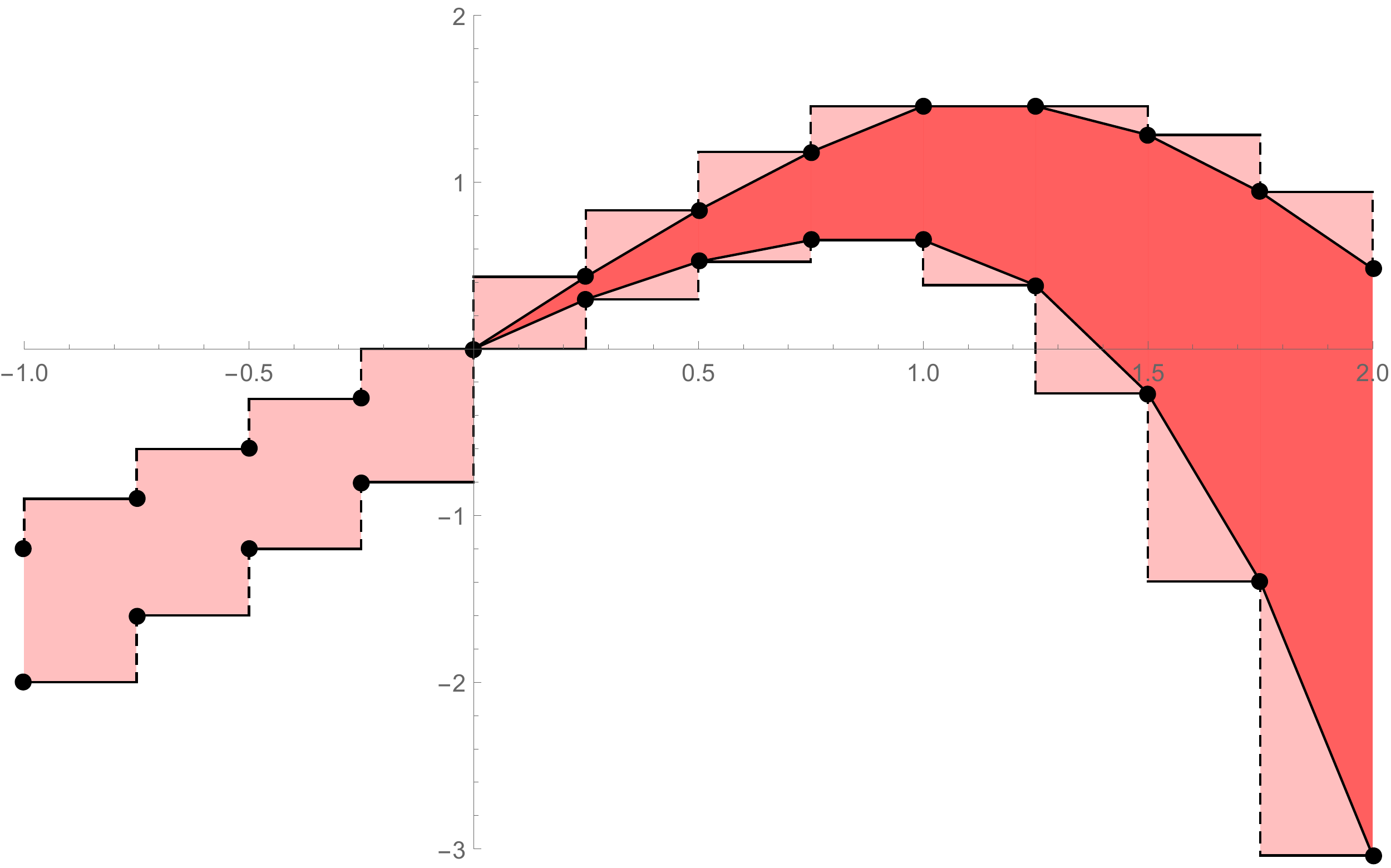}
	\caption{ An example of how we store an interval valued function $[ \ell(t) , u(t) ]$  in our algorithm.}
	\label{fig:FunctionStorage}
\end{figure}

\end{document}